\DeclareRobustCommand{\cyrtext}{\fontencoding{T2A}\selectfont\def\encodingdefault{T2A}}
\DeclareRobustCommand{\textcyr}[1]{\leavevmode{\cyrtext #1}}
\def\R{\mathbb{R}}
\def\Z{\mathbb{Z}}
\def\T{\mathbb{T}}
\def\1{\mathds{1}}
\def\eps{\varepsilon}
\newcommand\zalpha{\Z \alpha + \Z}
\newcommand\lam[1]{\Lambda(\alpha,#1)}
\newcommand\E{E}
\newcommand\D{\mathcal{D}}
\renewcommand\leq{\leqslant}
\renewcommand\geq{\geqslant}
\newcommand\ft[1]{\widehat #1}
\newcommand\dotprod[2]{\langle #1 , #2 \rangle}
\newcommand\mes{\operatorname{mes}}
\newcommand\bmo{\operatorname{BMO}}
\theoremstyle{plain}
\newtheorem{theorem-main}{Theorem}
\newtheorem{corollary-main}{Corollary}
\newtheorem{theorem}{Theorem}[section]
\newtheorem{lemma}[theorem]{Lemma}
\newtheorem{corollary}[theorem]{Corollary}
\newtheorem*{theorem-a}{Theorem A}
\newtheorem*{theorem-b}{Theorem B}
\theoremstyle{definition}
\newtheorem*{definition*}{Definition}
\newtheorem*{remark*}{Remark}
\newenvironment{enumerate-math}
{\begin{enumerate}
\addtolength{\itemsep}{5pt}
\renewcommand\theenumi{(\roman{enumi})}}
{\end{enumerate}}
\begin{document}

\title[Exponential Riesz bases, discrepancy and BMO]
{Exponential Riesz bases, discrepancy of irrational\\
rotations and BMO}

\author{Gady Kozma}
\address{Department of Mathematics, Weizmann Institute of Science, Rehovot 76100, Israel.}
\email{gady.kozma@weizmann.ac.il}

\author{Nir Lev}
\address{Department of Mathematics, Weizmann Institute of Science, Rehovot 76100, Israel.}
\email{nir.lev@weizmann.ac.il}

\subjclass[2000]{42C15, 11K38}

\keywords{Riesz bases, Quasicrystals, Discrepancy, Bounded mean oscillation}

\begin{abstract}
We study the basis property of systems of exponentials with frequencies
belonging to `simple quasicrystals'. We show that a diophantine condition
is necessary and sufficient for such a system to be a Riesz basis in $L^2$
on a finite union of intervals. For the proof we extend to $\bmo$ a theorem
of Kesten about the discrepancy of irrational rotations of the circle.
\end{abstract}

\maketitle


\section{Introduction}
\label{section:introduction}

\subsection{Sampling and interpolation}
A band-limited signal is an entire function $F$ of exponential type,
square-integrable on the real axis. According to the classical Paley-Wiener
theorem, $F$ is the Fourier transform of an $L^2$-function supported by a
bounded (measurable) set $S \subset \R$, which is called the spectrum of
$F$. We shall denote by $PW_S$ the Paley-Wiener space of all functions
$F \in L^2(\R)$ which are Fourier transforms of functions from $L^2(S)$,
\[
F(t) = \int_{S} f(x) \, e^{-2 \pi i t x} \, dx, \quad f \in L^2(S).
\]

A discrete set $\Lambda \subset \R$ is called a \emph{set of sampling}
for $PW_S$ if every signal with spectrum in $S$ can be reconstructed in
a stable way from its `samples' $\{F(\lambda), \lambda \in \Lambda\}$,
that is, there are positive constants $A,B$ such that the inequalities
\begin{equation}
\label{eq:sampling}
A \|F\|_{L^2(\R)} \leq
\Big( \sum_{\lambda \in \Lambda} |F(\lambda)|^2 \Big)^{1/2} \leq
B \|F\|_{L^2(\R)}
\end{equation}
hold for every $F \in PW_S$. Also, $\Lambda$ is called a \emph{set of
interpolation} for $PW_S$ if every data $\{c_{\lambda}\} \in \ell^2(\Lambda)$
can be ``transmitted'' as samples, which means that there exists at least one
function $F \in PW_S$ such that $F(\lambda) = c_\lambda$ $(\lambda \in \Lambda)$.

The sampling and interpolation properties can also be formulated in terms
of the exponential system
\[
\E(\Lambda) = \{\exp 2 \pi i \lambda t, \; \lambda \in \Lambda\}.
\]
The sampling property of $\Lambda$ means that $\E(\Lambda)$ is a \emph{frame}
in the space $L^2(S)$, while the interpolation holds when $\E(\Lambda)$ is
a \emph{Riesz-Fischer} system in this space. It follows that $\Lambda$ is
a set of both sampling and interpolation if and only if $\E(\Lambda)$ forms
a \emph{Riesz basis} in $L^2(S)$. See \cite{young} for a detailed discussion.

Sampling and interpolation may also be discussed in the periodic setting.
If $S$ is a measurable subset of the circle group $\T = \R / \Z$, then one can
consider the frame or Riesz-Fischer properties in $L^2(S)$ of the exponential
system $\E(\Lambda)$, where $\Lambda \subset \Z$.

\subsection{Density}
The right inequality in \eqref{eq:sampling} follows from the separation condition
\[
\inf_{\lambda, \lambda' \in \Lambda, \; \lambda \neq \lambda'} |\lambda - \lambda'| > 0
\]
which is also necessary for the interpolation, so this condition will always be
assumed below. The lower and upper uniform densities of a separated set
$\Lambda$ are defined respectively by
\[
\D^-(\Lambda) = \lim_{r \to \infty} 
\min_{a \in \R} \frac{\# (\Lambda \cap (a, a+r))}{r} \, ,
\quad
\D^+(\Lambda) = \lim_{r \to \infty} 
\max_{a \in \R} \frac{\# (\Lambda \cap (a, a+r))}{r}.
\]
Landau obtained in \cite{landau:necessary} necessary conditions for sampling and
interpolation in terms of these densities:

\emph{If $\Lambda$ is a set of sampling for $PW_S$, then $\D^{-}(\Lambda) \geq \mes S$.}

\emph{If $\Lambda$ is a set of interpolation for $PW_S$, then $\D^{+}(\Lambda) \leq \mes S$.}

\noindent
Here $S$ is a bounded measurable set, and $\mes S$ is the Lebesgue measure of $S$.

For ``regularly'' distributed sequences the two densities above coincide,
and their common value, denoted $\D(\Lambda)$, is called the
\emph{uniform density} of $\Lambda$. It follows that:

\emph{If $\E(\Lambda)$ is a Riesz basis in $L^2(S)$ then $\Lambda$ has
a uniform density $\D(\Lambda) = \mes S$.}

\subsection{Universality problem}
Olevskii and Ulanovskii posed in 
\cite{olevskii-ulanovskii:universal-1, olevskii-ulanovskii:universal-2}
the following question: is it possible to find a ``universal'' set $\Lambda$
of given density, which provides a stable reconstruction of any signal whose
spectrum has a sufficiently small Lebesgue measure? Similarly, does there exist
$\Lambda$ of given density which is a set of interpolation in every $PW_S$
with $\mes S$ sufficiently large?

It was proved in \cite{olevskii-ulanovskii:universal-1, olevskii-ulanovskii:universal-2}
that no universal set of sampling or interpolation exists if the spectrum $S$
is allowed to be an arbitrary bounded measurable set. On the other hand it
was also proved that under some topological restrictions on the spectra,
universal sampling and interpolation does exist:

\emph{Given any $d > 0$ there is a (separated) set $\Lambda \subset \R$ of
density $\D(\Lambda) = d$, such that:
\begin{enumerate-math}
\item
$\Lambda$ is a set of sampling for $PW_S$ for every compact set
$S \subset \R$, $\mes S < d$.
\item
$\Lambda$ is a set of interpolation for $PW_S$ for every open set
$S \subset \R$, $\mes S > d$.
\end{enumerate-math}}

In fact, this is a consequence of the following result:

\begin{theorem-a}[Olevskii and Ulanovskii
\cite{olevskii-ulanovskii:universal-1, olevskii-ulanovskii:universal-2}]
Given any $d > 0$ one can find a discrete set $\Lambda \subset \R$
(a perturbation of an arithmetical progression) such that $\E(\Lambda)$
is a Riesz basis in $L^2(S)$ for every set $S \subset \R$, $\mes S = d$,
which is the union of finitely many disjoint intervals such that the
lengths of these intervals and the gaps between them are commensurable.
\end{theorem-a}

The latter result shows that one can construct a universal exponential
Riesz basis $\E(\Lambda)$, $\Lambda \subset \R$, in the space $L^2(S)$
for a ``dense'' family of sets $S \subset \R$.

\subsection{Simple quasicrystals}
A different construction of universal sampling and interpolation sets,
termed `simple quasicrystals', was presented by Matei and Meyer in the
papers \cite{matei-meyer-quasicrystals, matei-meyer-variant, matei-meyer-simple}.
Their construction is based on the so-called `cut and project' scheme
introduced by Meyer in 1972 (see \cite{meyer-pisot, meyer-algebraic}).
Here we shall restrict ourselves to the periodic setting, in which the
simple quasicrystals take the following form.

Let $\alpha$ be an irrational real number, and consider the sequence of
points $\{n \alpha\}$, $n \in \Z$, on the circle $\T = \R / \Z$. Given
an interval $I = [a,b) \subset \T$ define the following subset of $\Z$,
\[
\lam{I} := \{ n \in \Z : a \leq n \alpha < b \}.
\]
It is well-known that the points $\{n \alpha\}$ are equidistributed
on the circle $\T$. Moreover, they are well-distributed (see
\cite{kuipers-niederreiter}). This implies that the set $\lam{I}$
has a uniform density $\D(\lam{I}) = |I|$, where $|I|$ denotes the
length of the interval $I$.

The set $\lam{I}$ is a set of universal sampling and interpolation:

\begin{theorem-b}[Matei and Meyer \cite{matei-meyer-variant}]
\quad
\begin{enumerate-math}
\item
$\E(\lam{I})$ is a frame in $L^2(S)$ for every compact set $S \subset \T$, $\mes S < |I|$.
\item
$\E(\lam{I})$ is Riesz-Fischer in $L^2(S)$ for every open set $S \subset \T$, $\mes S > |I|$.
\end{enumerate-math}
\end{theorem-b}

A similar result in the non-periodic setting was formulated
and proved in \cite{matei-meyer-quasicrystals, matei-meyer-simple}.
In the paper \cite{matei-meyer-variant} the authors raised the
question of what can be said in the ``limiting case'' when the
measure of $S$ is equal to the density of $\Lambda$.

\subsection{Riesz bases and quasicrystals}
In the present paper we study the following problem. Is it true
that the exponential system $\E(\lam{I})$ is a Riesz basis in 
$L^2(S)$ for certain subsets $S$ of the circle $\T$? Moreover,
does it provide a universal Riesz basis for some ``dense'' family
of sets?

Our first result shows that the question admits a positive answer
provided that a diophantine condition, relating $\alpha$ and the
length of the interval $I$, holds:

\begin{theorem-main}
\label{thm:main-yes}
Let $|I| \in \zalpha$. Then the exponential system $\E(\lam{I})$ is a
Riesz basis in $L^2(S)$ for every set $S \subset \T$, $\mes S = |I|$,
which is the union of finitely many disjoint intervals whose lengths
belong to $\zalpha$.
\end{theorem-main}

This result will be proved in a more general form in Section
\ref{section:riesz-bases}. It follows that if $|I| \in \zalpha$
then the exponential system $\E(\lam{I})$ is a universal Riesz basis
for a ``dense'' family of sets, and, in particular, that properties
(i) and (ii) in Theorem B can be streng\-thened as follows:

\begin{corollary-main}
\label{cor:main-yes}
Let $|I| \in \zalpha$. Suppose that $U \subset \T$ is an open
set, $K$ is compact, $K \subset U$ and $\mes K < |I| < \mes U$.
Then one can find a set $S$, $K \subset S \subset U$, which is
a finite union of intervals and such that $\E(\lam{I})$ is a
Riesz basis in $L^2(S)$.
\end{corollary-main}

Theorem \ref{thm:main-yes} also provides a new result about existence of
exponential Riesz bases on finite unions of intervals. This problem was
studied in \cite{bezuglaya-katsnelson, lev, lyubarskii-seip,
lyubarskii-spitkovsky, seip} and in other papers, and it is still open in
general. We discuss this in more details in Section \ref{section:remarks}.

Our second result complements the picture by clarifying the role of the 
diophantine assumption $|I| \in \zalpha$ in Theorem \ref{thm:main-yes}.
It turns out that this condition is not only sufficient, but also necessary,
for the simultaneous sampling and interpolation property on ``multiband
spectra''.

\begin{theorem-main}
\label{thm:main-no}
Suppose that
$|I| \notin \zalpha$. Then $\E(\lam{I})$ is not a Riesz basis in $L^2(S)$,
for any set $S \subset \T$ which is the union of finitely many intervals.
\end{theorem-main}

\subsection{Discrepancy and Kesten's theorem}
The proofs of the results above are based on the connection of the
problem to the theory of equidistribution and discrepancy for the
irrational rotation of the circle.

Let $\alpha$ be a fixed irrational number. Given an interval $I \subset
\T$, we denote by $\nu(n,I)$ the number of integers $0 \leq k \leq n-1$
such that $k \alpha \in I$. The equidistribution of the points
$\{n \alpha\}$ on the circle $\T$ means that
\[
\lim_{n \to \infty} \frac{\nu(n, I)}{n} = |I|,
\]
for every interval $I$. A quantitative measurement of this equidistribution
is given by the \emph{discrepancy function}, defined by
\[
D(n,I) = \nu(n,I) - n |I|.
\]
Thus we have $D(n,I) = o(n)$ as $n \to \infty$, and, in fact, it is
not difficult to show that this estimate holds uniformly with respect
to the interval $I$.

Better estimates for the discrepancy can be obtained based on diophantine
properties of the number $\alpha$. For example, if $\alpha$ is a quadratic
irrational then $D(n, I) = O(\log n)$, and this estimate is uniform with
respect to $I$. On the other hand, it is known that for every $\alpha$
the lower bound
\[
\sup_{I \subset \T} |D(n,I)| > c \log n
\]
holds for infinitely many $n$'s, where $c$ is a positive absolute 
constant (see \cite{kuipers-niederreiter}). Compare also to the case
where $\alpha$ and $I$ are random \cite{kesten:i, kesten:ii}.

It was discovered, however, that for certain special intervals $I$ the
discrepancy is bounded. Hecke proved in \cite{hecke} that if $|I| \in
\zalpha$ then $D(n,I) = O(1)$ as $n \to \infty$. It was conjectured
by Erd\"{o}s and Sz\"{u}sz \cite{erdos} that also the converse to 
Hecke's result should be true. This conjecture was confirmed by Kesten
\cite{kesten} who proved that if $D(n,I) = O(1)$ then $|I| \in \zalpha$.
For further developments see \cite{furstenberg-keynes-shapiro, oren, petersen}.

We will prove an extension of Kesten's theorem for \emph{bounded mean oscillation}
of the discrepancy. We say that a sequence of complex numbers $\{c_n\}$ has
bounded mean oscillation, $\{c_n\} \in \bmo$, if for every $n < m$ one has
\begin{equation}
\label{eq:def-bmo-1}
\frac1{m-n} \sum_{k=n}^{m-1} \Big| c_k - \frac{c_{n} + \dots + c_{m-1}}{m-n} \Big|
\leq M,
\end{equation}
for some constant $M$ independent of $n$ and $m$. Certainly, every bounded
sequence belongs to $\bmo$. On the other hand, it is well-known that $\bmo$
contains also unbounded sequences, for example $c_n = \log n$ is such a sequence.

We will prove the following generalization of Kesten's theorem.

\begin{theorem-main}
\label{thm:main-kesten}
Let $\alpha$ be an irrational number, and $I \subset \T$ be an interval.
If the sequence $\{D(n,I)\}$, $n=1,2,3,\dots$, belongs to $\bmo$, then
$|I| \in \zalpha$.
\end{theorem-main}

This result along with Hecke's theorem reveals a dichotomy: either
the discrepancy $D(n, I)$ is bounded, or it does not even have
bounded mean oscillation. In Section \ref{section:ergodic} we will
actually prove an extension of Theorem \ref{thm:main-kesten} for
more general sets than single intervals (such as unions of several
intervals), but, on the other hand, we cannot show that such a
dichotomy still holds in this case.

\subsection{Meyer's duality}
The link between the sampling and interpolation problems for the sets $\lam{I}$
and the theory of discrepancy for irrational rotations is an idea due to Meyer,
which we refer to as the `duality principle'. Meyer's duality principle
(see Section \ref{section:duality}) enables us to reduce the problem about 
exponential Riesz bases in $L^2(S)$ to a similar problem in $L^2(I)$, where
$I$ is a single interval. It is then possible to invoke known results about
exponential Riesz bases in $L^2(I)$.

The proof of Theorem \ref{thm:main-yes} (Section \ref{section:riesz-bases})
consists of three main ingredients: Meyer's duality principle, a theorem of Avdonin
\cite{avdonin} about exponential Riesz bases in $L^2(I)$ (an extension of Kadec's
$1/4$ theorem) and Hecke's result about discrepancy. In order to prove Theorem
\ref{thm:main-no} (Section \ref{section:necessity}) we combine the duality principle
with a theorem due to Pavlov \cite{pavlov}, which describes completely the exponential
Riesz bases in $L^2(I)$. Pavlov's theorem allows us to conclude that the discrepancy
must be in $\bmo$, and we can then apply an extension of Theorem \ref{thm:main-kesten}
(Section \ref{section:ergodic}).

\subsection{Acknowledgements}
We are grateful to A.\ M.\ Olevskii for introducing us to the concept of
quasicrystals and for his suggestions which improved the presentation of
this paper. We thank Itai Benjamini for referring us to Kesten's paper.
We also thank Evgeny Abakumov, Jordi Marzo, Joaquim Ortega-Cerd\`{a},
Ron Peled, Omri Sarig, Mikhail Sodin and Armen Vagharshakyan for helpful
comments and discussions.


\section{Meyer's duality principle}
\label{section:duality}

\subsection{}
Let $\alpha$ be a fixed irrational number. For any set $S \subset \T$ one can,
in principle, consider the ``Meyer set'' based on $\alpha$ and $S$, defined by
\begin{equation}
\label{eq:def-lam}
\lam{S} = \{n \in \Z : n \alpha \in S\}.
\end{equation}
Meyer discovered a duality phenomenon connecting the sampling and interpolation
properties of the sets $\lam{S}$. This duality allowed to prove the results
about sampling and interpolation such as Theorem B mentioned in the introduction.
Meyer's duality principle is the starting point for our approach as well.

It will be convenient to introduce the following terminology.

\begin{definition*}
A set $S \subset \T$ will be called a \emph{multiband set} if it is the union
of finitely many disjoint intervals. We will say that a multiband set $S$ is 
\emph{left semi-closed} if each one of the intervals contains its left 
endpoint but not its right endpoint.
Equivalently, $S$ is left semi-closed if the indicator function $\1_S$ is 
continuous from the right. We also define a \emph{right semi-closed} multiband
set, in a similar way. Finally, $S$ will be called \emph{semi-closed} if it is
either left semi-closed or right semi-closed.
\end{definition*}

We shall need a version of the duality principle which is suitable for our
setting. Let $-\Lambda$ denote the set $\{-n : n \in \Lambda\}$. We will
prove the following:

\begin{lemma}
\label{lemma:duality}
Let $U$ be a semi-closed multiband set, and $V$ be a (not necessarily semi-closed)
multiband set. Then:
\begin{enumerate-math}
\item
If $\E(\lam{V})$ is a frame in the space $L^2(U)$, then $\E(-\lam{U})$ is a 
Riesz-Fischer system in $L^2(V)$.
\item
If $\E(\lam{V})$ is a Riesz-Fischer system in $L^2(U)$, then $\E(-\lam{U})$
is a frame in $L^2(V)$.
\end{enumerate-math}
\end{lemma}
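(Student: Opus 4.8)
The plan is to recast both properties as statements about the \emph{angle} between two explicit closed subspaces of $L^2(\T)$, so that the two parts of the lemma become two instances of one symmetric ``position/frequency'' duality. For $f\in L^2(U)$ let $F=\1_U f\in L^2(\T)$, so that $\langle f,e_n\rangle_{L^2(U)}=\widehat F(n)$ and, by Parseval,
\[
\sum_{n\in\lam{V}}\bigl|\langle f,e_n\rangle_{L^2(U)}\bigr|^2
=\sum_{n:\,n\alpha\in V}|\widehat F(n)|^2
\leq\sum_{n}|\widehat F(n)|^2=\|F\|^2=\|f\|^2 .
\]
Hence the upper (Bessel) bound always holds with constant $1$, and $\E(\lam{V})$ is a frame in $L^2(U)$ \emph{iff} it satisfies only the lower bound. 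Write $\mathcal S(P)=\{F\in L^2(\T):\operatorname{supp}F\subset P\}$ for the spatial subspace cut out by $P\subset\T$, and $\mathcal H(Q)=\{F\in L^2(\T):\widehat F(n)=0\text{ whenever }n\alpha\notin Q\}$ for the spectral subspace selected by the window $Q\subset\T$. Putting $W=\T\setminus V$, the lower frame bound reads $\sum_{n:\,n\alpha\in W}|\widehat F(n)|^2\leq(1-A)\|F\|^2$ on $\mathcal S(U)$, i.e.\ it is the positive-angle condition $\|P_{\mathcal H(W)}P_{\mathcal S(U)}\|<1$; likewise $\E(-\lam{U})=\E(\lam{-U})$ is Riesz--Fischer in $L^2(V)$ iff $\int_V|g|^2\geq c\|g\|^2$ for all $g\in\mathcal H(-U)$, i.e.\ iff $\|P_{\mathcal S(W)}P_{\mathcal H(-U)}\|<1$. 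Using that $\|PQ\|=\|QP\|$ for orthogonal projections, both assertions of the lemma are then the forward implication of the single symmetric equivalence
\[
\bigl\|P_{\mathcal H(Q)}P_{\mathcal S(P)}\bigr\|<1
\quad\Longleftrightarrow\quad
\bigl\|P_{\mathcal S(Q)}P_{\mathcal H(-P)}\bigr\|<1 ,
\]
valid for every semi-closed $P$ and every $Q$: part (i) is the case $P=U,\ Q=\T\setminus V$, and part (ii) the case $P=U^{c},\ Q=V$ (note $U^{c}$ is again semi-closed). It therefore suffices to prove this equivalence.

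The link between the two sides is a Poisson-summation type identity. Expanding $\1_W$ into its Fourier series and applying Parseval, for $F\in\mathcal S(U)$ one obtains
\[
\sum_{n:\,n\alpha\in W}|\widehat F(n)|^2
=\sum_{k}\widehat{\1_W}(k)\,\langle F,\tau_{k\alpha}F\rangle,
\qquad \tau_a F:=F(\,\cdot-a\,),
\]
in which only the $k$ with $U\cap(U+k\alpha)\neq\emptyset$ survive. The analogous expansion of $\int_W|g|^2$ for $g\in\mathcal H(-U)$ yields the \emph{same} coefficients $\widehat{\1_W}(k)$, now paired with the autocorrelation $\sum_j\widehat g(j)\overline{\widehat g(j+k)}$ and over the same range of $k$. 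Since $\langle F,\tau_{k\alpha}F\rangle=\int_\T e^{2\pi iky}\,d\mu_F(y)$ with $\mu_F=\sum_n|\widehat F(n)|^2\delta_{n\alpha}$, while $\sum_j\widehat g(j)\overline{\widehat g(j+k)}=\int_\T e^{2\pi iky}|g(y)|^2\,dy$, both quadratic forms are revealed to be one and the same linear functional applied to an ``autocorrelation measure'' on the internal circle --- a \emph{discrete} measure $\mu_F$ on the frame side, and the \emph{absolutely continuous} measure $|g|^2\,dy$ on the Riesz--Fischer side.

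To close the equivalence I would argue by contraposition: if the Riesz--Fischer bound fails there are unit vectors $g_j\in\mathcal H(-U)$ whose mass escapes into $W$, and from them one must manufacture unit vectors $F_j\in\mathcal S(U)$ whose windowed spectrum escapes into $W$ (so the frame bound fails too), and symmetrically. Through the bridge this is exactly the task of transferring between the discrete measures $\mu_F$ and the absolutely continuous measures $|g|^2\,dy$ while nearly preserving their pairing with $\1_W$, and I expect this transfer to be the crux: the two measures can never coincide, so no unitary of $L^2(\T)$ intertwines the two forms (reflection, for instance, maps $\mathcal H(-U)$ only onto the spectral space $\mathcal H(U)$, never onto a spatial space), and the passage between them must genuinely use the equidistribution --- in fact the well-distribution --- of the orbit $\{n\alpha\}$ to discretize $|g|^2\,dy$ along the orbit inside $U$. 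Two technical points then demand care. First, $\widehat{\1_W}(k)=O(1/|k|)$ only, so the $k$-series is merely conditionally convergent and must be summed symmetrically (e.g.\ via Fej\'er means of $\1_W$, using that the orbit avoids the finitely many endpoints of $W$). Second, the Fourier series of an indicator returns the half-sum at each jump, so the discrete and continuous bookkeeping match only once the boundary is normalized in accordance with the convention $\lam{I}=\{n:a\leq n\alpha<b\}$ built into the Meyer set; this is precisely why $U$ is taken semi-closed, whereas the window $V$ may be arbitrary, a fixed endpoint being hit by $\{n\alpha\}$ only on a null set and hence invisible to the absolutely continuous side.
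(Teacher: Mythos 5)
Your reduction is correct and genuinely clean: since $\lam{V}\subset\Z$, Parseval gives the Bessel (upper frame) bound for free, so the frame property is purely a lower bound; likewise, by the characterization of Riesz--Fischer systems (Lemma \ref{lemma:interpolation}) and Parseval again, the Riesz--Fischer property of $\E(-\lam{U})$ in $L^2(V)$ is the lower bound $\int_V|g|^2\geq c\|g\|^2$ on the spectral subspace $\mathcal H(-U)$. Both halves of the lemma then do become one symmetric implication about angles between a spatial and a spectral subspace, with $P=U$, $Q=\T\setminus V$ for part (i) and $P=U^c$, $Q=V$ for part (ii) (and you are right that the complement of a left semi-closed multiband set is again left semi-closed). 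Up to this point everything checks out, and it is an attractive repackaging.

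The genuine gap is that the symmetric equivalence to which you reduced everything is never proved. Your ``bridge'' identity only says that the two quadratic forms are pairings of $\1_W$ against two \emph{different} autocorrelation measures --- the discrete $\mu_F=\sum_n|\widehat F(n)|^2\delta_{n\alpha}$ on the frame side, the absolutely continuous $|g|^2\,dy$ on the Riesz--Fischer side. An identity of coefficients does not transfer a norm inequality from one cone of measures to the other; that transfer \emph{is} the content of the lemma, and your text defers it explicitly (``one must manufacture unit vectors $F_j\in\mathcal S(U)$\dots'', ``I expect this transfer to be the crux''). This manufacturing is exactly what the paper's proof carries out, and it is not a formality: given a finite sequence $(c_n)$ supported on $\{n:n\alpha\in U\}$, one sets $F_\eps(t)=\sum_n c_n\varphi_\eps(t-n\alpha)$ with $\varphi_\eps$ a smooth bump supported in $(0,\eps)$; left semi-closedness of $U$ puts $F_\eps$ in $L^2(U)$ for small $\eps$, one computes $\widehat{F}_\eps(n)=f(n\alpha)\,\widehat{\varphi}_\eps(n)$ where $f(t)=\sum_n c_n e^{-2\pi i n t}$, and the two limit lemmas (Lemmas \ref{lemma:uniform-distribution} and \ref{lemma:summation}) give $\|F_\eps\|_{L^2(U)}^2\to\sum_n|c_n|^2$ and $\sum_{n\alpha\in V}|\widehat{F}_\eps(n)|^2\to\int_V|f|^2$ as $\eps\to 0$; part (ii) runs the same construction in the reverse direction. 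This is precisely the ``discretization of $|g|^2\,dy$ along the orbit inside $U$'' that your sketch calls for but does not perform. Note also that the route you propose for the bridge --- the Fourier expansion of $\1_W$ --- carries the two obstructions you yourself flag (conditional convergence of $\sum_k\widehat{\1_W}(k)(\cdots)$ since $\widehat{\1_W}(k)=O(1/|k|)$, and half-sum values at jumps), and these are not minor bookkeeping: avoiding them is exactly why the paper regularizes with compactly supported smooth bumps rather than sharp cutoffs. As written, your argument establishes a correct reformulation of the lemma, but not the lemma.
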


An immediate consequence of Lemma \ref{lemma:duality} is:

\begin{corollary}
\label{cor:duality}
Let $U$ and $V$ be two multiband sets, where $U$ is semi-closed. 
If the exponential system $\E(\lam{V})$ is a Riesz basis in $L^2(U)$,
then $\E(- \lam{U})$ is a Riesz basis in $L^2(V)$.
\end{corollary}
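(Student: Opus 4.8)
The plan is to obtain the statement directly from Lemma~\ref{lemma:duality}, using nothing beyond the definition of a Riesz basis as a system that is simultaneously a frame and a Riesz-Fischer system. All of the analytic content already sits inside the lemma; the corollary simply records what happens when one invokes both of its parts at the same time.

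First I would unwind the hypothesis. Saying that $\E(\lam{V})$ is a Riesz basis in $L^2(U)$ means, by the sampling/interpolation dictionary recalled in the introduction, that $\E(\lam{V})$ is at once a frame and a Riesz-Fischer system in $L^2(U)$. Since $U$ is semi-closed and $V$ is an arbitrary multiband set, the hypotheses of Lemma~\ref{lemma:duality} are met in the correct configuration: semi-closedness is required only of $U$, which is also the set indexing the dual system $-\lam{U}$.

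Then I would feed these two properties into the two parts of the lemma separately. Part~(i) turns the frame property of $\E(\lam{V})$ in $L^2(U)$ into the Riesz-Fischer property of $\E(-\lam{U})$ in $L^2(V)$; part~(ii) turns the Riesz-Fischer property of $\E(\lam{V})$ in $L^2(U)$ into the frame property of $\E(-\lam{U})$ in $L^2(V)$. Since $\E(-\lam{U})$ is thereby both a frame and a Riesz-Fischer system in $L^2(V)$, it is a Riesz basis there, which is the claim. The one point of care — hardly an obstacle — is that the two parts of the lemma exchange the two properties (frame $\leftrightarrow$ Riesz-Fischer), so each half of the hypothesis must be matched with the correct part; the duality is genuinely crossing, not diagonal.
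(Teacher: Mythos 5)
Your proof is correct and is exactly the argument the paper intends: the corollary is stated there as an "immediate consequence" of Lemma~\ref{lemma:duality}, obtained precisely by splitting the Riesz basis property into frame plus Riesz-Fischer and applying parts (i) and (ii) in the crossed way you describe. Nothing is missing.
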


\subsection{}
We choose and fix a function $\varphi(x)$ on $\R$, infinitely smooth,
with compact support contained in the interval $(0, 1)$, and such that 
$\int_{\R} |\varphi(x)|^2 \, dx = 1$. Let
\[
\ft{\varphi}(\xi) = \int \varphi(x) \, e^{-2 \pi i \xi x} \, dx,
\quad \xi \in \R,
\]
be the Fourier transform of $\varphi$, which is a smooth and rapidly decreasing
function. For each $0 < \eps < 1$ define a function $\varphi_\eps$ on the circle $\T$ by
\[
\varphi_\eps(t) = \frac1{\sqrt{\eps}} \; \varphi (t/\eps), \quad 0 \leq t < 1.
\]
It follows that $\varphi_\eps$ is an infinitely smooth function on $\T$, supported
by $(0, \eps)$, such that $\int_{\T} |\varphi_\eps(t)|^2 \, dt = 1$, and
the Fourier coefficients of $\varphi_\eps$ are given by
\[
\ft{\varphi}_\eps(n) = \sqrt{\eps} \; \ft{\varphi}(\eps n), \quad n \in \Z.
\]

The following two lemmas are essentially due to Matei and Meyer
\cite{matei-meyer-quasicrystals, matei-meyer-simple}.

\begin{lemma}
\label{lemma:uniform-distribution}
For every Riemann integrable function $f$ on $\T$,
\[
\lim_{\eps \to 0} \; \sum_{n \in \Z} \big| f(n \alpha) \, 
\ft{\varphi}_\eps(n) \big|^2 = \int_{\T} |f(t)|^2 \, dt.
\]
\end{lemma}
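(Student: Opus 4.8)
The plan is to reduce everything to a weighted equidistribution statement and prove that statement by testing against characters. Writing $w_\eps(n) = |\ft{\varphi}_\eps(n)|^2 = \eps\,|\ft{\varphi}(\eps n)|^2 \geq 0$ and $g = |f|^2$, the claim becomes $S_\eps(g) \to \int_\T g$, where $S_\eps(g) := \sum_{n \in \Z} g(n\alpha)\, w_\eps(n)$. Two features of the weights will be used throughout: they are nonnegative, and by Parseval on $\T$ they satisfy $\sum_n w_\eps(n) = \int_\T |\varphi_\eps|^2 = 1$, so each $S_\eps$ is a positive linear functional with $S_\eps(\1) = 1$. Since $f$ is Riemann integrable it is bounded, hence $g = |f|^2$ is a bounded, nonnegative, Riemann integrable function, and $|S_\eps(g)| \leq \|g\|_\infty$ uniformly in $\eps$.

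First I would prove the statement for the characters $e_k(t) = e^{2\pi i k t}$. For $k = 0$ it is the identity $S_\eps(\1) = 1 = \int_\T \1$. For $k \neq 0$ the key observation is that $w_\eps$ is exactly the sequence of Fourier coefficients of $h_\eps := \varphi_\eps * \widetilde{\varphi}_\eps$, where $\widetilde{\varphi}_\eps(t) = \overline{\varphi_\eps(-t)}$; indeed $\ft{h}_\eps(n) = \ft{\varphi}_\eps(n)\,\overline{\ft{\varphi}_\eps(n)} = w_\eps(n)$. Consequently $S_\eps(e_k) = \sum_n w_\eps(n)\, e^{2\pi i n (k\alpha)} = h_\eps(k\alpha)$, i.e.\ the sum is merely the value of $h_\eps$ at the point $k\alpha$ on $\T$. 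Now $\varphi_\eps$ is supported in $(0,\eps)$ and $\widetilde{\varphi}_\eps$ in $(-\eps,0)$, so for $\eps < \tfrac12$ the convolution $h_\eps$ is supported in the arc $(-\eps,\eps) \pmod 1$. Since $\alpha$ is irrational, $k\alpha \not\equiv 0 \pmod 1$, so as soon as $\eps$ is smaller than the distance from $k\alpha$ to the nearest integer we get $h_\eps(k\alpha) = 0$. Hence $S_\eps(e_k) \to 0 = \int_\T e_k$, and by linearity $S_\eps(P) \to \int_\T P$ for every trigonometric polynomial $P$.

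Next I would pass from trigonometric polynomials to arbitrary continuous functions. Because the functionals $S_\eps$ and $\int_\T$ all have norm $1$ on $C(\T)$ and trigonometric polynomials are dense in $C(\T)$ (Fej\'er/Weierstrass), a standard three-term estimate upgrades convergence on polynomials to $S_\eps(g) \to \int_\T g$ for every continuous $g$. Finally, to reach the Riemann integrable $g = |f|^2$ I would use the nonnegativity of the weights together with a sandwich. Given $\delta > 0$, Riemann integrability provides continuous functions $g_1 \leq g \leq g_2$ on $\T$ with $\int_\T (g_2 - g_1) < \delta$ (trap $g$ between step functions and then round the steps off to continuous functions). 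Positivity of $S_\eps$ gives $S_\eps(g_1) \leq S_\eps(g) \leq S_\eps(g_2)$, and letting $\eps \to 0$ via the continuous case yields $\int_\T g_1 \leq \liminf S_\eps(g) \leq \limsup S_\eps(g) \leq \int_\T g_2$. Since $\int_\T g_1$ and $\int_\T g_2$ both lie within $\delta$ of $\int_\T g$ and $\delta$ is arbitrary, this forces $S_\eps(g) \to \int_\T g$, which is the assertion.

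I expect the character step to be the heart of the matter: the clean point is recognizing $w_\eps$ as the Fourier coefficient sequence of the compactly supported self-convolution $h_\eps$, which turns $S_\eps(e_k)$ into a single point evaluation $h_\eps(k\alpha)$ that vanishes identically for small $\eps$ --- this is precisely where the irrationality of $\alpha$ enters, and it is what lets the argument go through with no assumption on the Diophantine type of $\alpha$. The only other point requiring care is the last step, where Riemann integrability (rather than mere continuity) is handled through the sandwiching argument, crucially relying on the nonnegativity of the weights $w_\eps(n)$.
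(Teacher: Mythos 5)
Your proof is correct. It is worth pointing out that the paper itself contains no proof of this lemma: it states that the lemma is ``essentially due to Matei and Meyer,'' remarks that it is a consequence of the equidistribution of the points $\{n\alpha\}$ on $\T$, and refers to \cite{matei-meyer-simple} for the details. The argument indicated there runs differently from yours: since the weights $w_\eps(n)=\eps\,|\ft{\varphi}(\eps n)|^2$ vary slowly on the scale $1/\eps$, one partitions $\Z$ into blocks much shorter than $1/\eps$, treats the weight as constant on each block, replaces the block averages of $|f(n\alpha)|^2$ by $\int_\T |f|^2$ using the \emph{well}-distribution of $\{n\alpha\}$ (equidistribution uniform in the location of the block, which is what makes the replacement legitimate simultaneously for all blocks), and concludes via $\sum_n \eps\,|\ft{\varphi}(\eps n)|^2 \to \int_\R |\ft{\varphi}(\xi)|^2\, d\xi = 1$. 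Your route instead re-derives the needed weighted equidistribution from scratch by Weyl's method: the identity $S_\eps(e_k)=h_\eps(k\alpha)$, with $h_\eps=\varphi_\eps * \widetilde{\varphi}_\eps$ smooth (so its Fourier series converges pointwise) and supported in $(-\eps,\eps)$, makes each character sum vanish \emph{exactly} once $\eps$ is below the distance from $k\alpha$ to $\Z$ --- this is the only place irrationality enters, and nothing beyond $k\alpha\notin\Z$ is used --- after which positivity of the weights, the normalization $S_\eps(\1)=1$ from Parseval, and the sandwich $g_1\le |f|^2\le g_2$ by continuous functions handle first $C(\T)$ and then the Riemann integrable case. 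What you gain is a self-contained proof relying only on irrationality and Parseval, with no appeal to well-distribution or any quantitative equidistribution input (consistent with the paper's insistence that $\alpha$ is an arbitrary irrational); what the Matei--Meyer argument buys is that it transfers directly to the non-periodic cut-and-project setting where the analogous lemma is needed. All the delicate points in your write-up check out: $|f|^2$ is bounded and Riemann integrable, the support computation $(0,\eps)+(-\eps,0)=(-\eps,\eps)$ mod $1$ is valid for $\eps<\tfrac12$, and the monotonicity step $S_\eps(g_1)\le S_\eps(g)\le S_\eps(g_2)$ uses exactly the nonnegativity of $w_\eps(n)$ as you say.
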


\begin{lemma}
\label{lemma:summation}
Let $\{c_n\}$ be a sequence of complex numbers in $\ell^1(\Z)$. Then
\[
\lim_{\eps \to 0} \; \int_{S} \Big| \sum_{n \in \Z}
c_n \, \varphi_\eps (t - n \alpha) \Big|^2 \, dt
= \sum_{n \in \lam{S}} |c_n|^2,
\]
for every left semi-closed multiband set $S$.
\end{lemma}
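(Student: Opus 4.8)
The plan is to exploit that each translate $\varphi_\eps(\cdot - n\alpha)$ is supported on the short interval $(n\alpha, n\alpha + \eps)$, so that as $\eps \to 0$ these supports shrink to the \emph{right} of the points $n\alpha$; the left semi-closedness of $S$ is then exactly what makes the membership $n\alpha \in S$ align with whether this shrinking support eventually lies inside $S$ or inside its complement. I would first record the two facts about $\varphi_\eps$ that drive everything: $\|\varphi_\eps\|_{L^2(\T)} = 1$, and $\operatorname{supp}\varphi_\eps = (0,\eps)$. From the first, the triangle inequality gives the uniform bound
\[
\Big\| \sum_{n \in \Z} c_n \, \varphi_\eps(\cdot - n\alpha) \Big\|_{L^2(\T)} \leq \sum_{n \in \Z} |c_n| \, \|\varphi_\eps(\cdot - n\alpha)\|_{L^2(\T)} = \|c\|_{\ell^1(\Z)},
\]
valid for every $\eps$. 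This is where the $\ell^1$ hypothesis enters, and it is what will allow passage from finitely supported sequences to the general case.

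Next I would settle the \emph{finitely supported} case exactly. Suppose $c_n = 0$ for $|n| > N$. Since $\alpha$ is irrational, the finitely many points $n\alpha$, $|n| \leq N$, are distinct on $\T$, so for $\eps$ small the supports $(n\alpha, n\alpha + \eps)$ are pairwise disjoint; hence all cross terms in the expansion of $\int_S |\sum_n c_n \varphi_\eps(\cdot - n\alpha)|^2$ vanish, and only the diagonal survives. For each fixed $n$, as $\eps \to 0$ the support $(n\alpha, n\alpha + \eps)$ is eventually contained either in $S$ (when $n\alpha \in S$: left semi-closedness puts a genuine right-neighbourhood of every point of $S$, including its left endpoints, inside $S$) or in the complement of $S$ (when $n\alpha \notin S$: the delicate case where $n\alpha$ is a right endpoint is settled again by left semi-closedness, which excludes that point from $S$ and places its right-neighbourhood outside). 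Consequently $\int_S |\varphi_\eps(\cdot - n\alpha)|^2 = \1_{\{n\alpha \in S\}}$ for $\eps$ small, and the whole integral equals $\sum_{n \in \lam{S}} |c_n|^2$ \emph{exactly} once $\eps$ is small enough.

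Finally I would run the approximation. Given $\eta > 0$, choose $N$ with $\sum_{|n|>N} |c_n| < \eta$, split $c = c' + c''$ into the truncation and the tail, and write $F_\eps = F'_\eps + F''_\eps$ accordingly. The uniform bound applied to $c''$ gives $\|F''_\eps\|_{L^2(\T)} \leq \|c''\|_{\ell^1} < \eta$, so by the triangle inequality in $L^2(S)$ the quantity $\|F_\eps\|_{L^2(S)}$ differs from $\|F'_\eps\|_{L^2(S)}$ by at most $\eta$, uniformly in $\eps$. On the discrete side, $\big(\sum_{n \in \lam{S}} |c_n|^2\big)^{1/2}$ differs from $\big(\sum_{n \in \lam{S}} |c'_n|^2\big)^{1/2}$ by at most $\|c''\|_{\ell^1} < \eta$, using $\sum |c''_n|^2 \leq \big(\sum |c''_n|\big)^2$. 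Feeding in the exact identity for $c'$ from the previous step, valid for all small $\eps$, a three-$\eta$ argument yields $\big|\, \|F_\eps\|_{L^2(S)} - \big(\sum_{n \in \lam{S}} |c_n|^2\big)^{1/2} \,\big| \leq 3\eta$ for $\eps$ small, and since $\eta$ is arbitrary the squared quantity converges to $\sum_{n \in \lam{S}} |c_n|^2$.

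The main obstacle is that the threshold ``$\eps$ small enough'' for a translate's support to settle inside $S$ or its complement depends on the distance from $n\alpha$ to the nearest endpoint of $S$, and these distances are not bounded below over all $n$, so no single $\eps$ works simultaneously for all $n$. The $\ell^1$ assumption is precisely what circumvents this, reducing the infinite sum to a finite one with an $L^2(\T)$ error controlled uniformly in $\eps$; after that, only finitely many translates—each with its own, now harmless, threshold—need to be tracked. The left semi-closedness is the second essential ingredient, matching the rightward support $(0,\eps)$ of $\varphi_\eps$ and removing all ambiguity at the endpoints of $S$.
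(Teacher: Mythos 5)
Your proof is correct and develops exactly the mechanism the paper indicates for this lemma (the paper itself gives no proof, remarking only that $\varphi_\eps(t-n\alpha)$ is supported in a small right neighbourhood of $n\alpha$ and deferring to \cite{matei-meyer-simple}): the matching of the rightward supports with left semi-closedness, settled exactly for finitely supported sequences, plus the $\ell^1$ truncation with the uniform-in-$\eps$ bound $\|\sum_n c_n\varphi_\eps(\cdot-n\alpha)\|_{L^2(\T)}\leq\|c\|_{\ell^1}$ to handle the non-uniform smallness thresholds. No gaps; nothing further is needed.
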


Here, as always, $\alpha$ is an arbitrary irrational number. Lemma
\ref{lemma:uniform-distribution} is a consequence of the equidistribution
of the points $\{n \alpha\}$ on $\T$. Lemma \ref{lemma:summation} is due
to the fact that $\varphi_\eps(t - n \alpha)$ is supported by a small right
neighborhood of the point $n \alpha$. For the proof see \cite{matei-meyer-simple}.
Certainly, one can also get a version of Lemma \ref{lemma:summation} for right
semi-closed multi\-band sets by choosing the function $\varphi$ supported by the
interval $(-1,0)$ instead of $(0,1)$.

We will also need the following well-known fact:

\begin{lemma}[See \cite{young}, p.\ 155]
\label{lemma:interpolation}
The exponential system $\E(\Lambda)$ is Riesz-Fischer in $L^2(S)$ if and only if
there is a positive constant $C$ such that the inequality
\[
\sum_{\lambda \in \Lambda} |c_\lambda|^2 \leq
C \int_{S} \Big| \sum_{\lambda \in \Lambda} c_\lambda \, e^{2 \pi i \lambda t} \Big|^2 \, dt
\]
holds for every finite sequence of scalars $\{c_\lambda\}$.
\end{lemma}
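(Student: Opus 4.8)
The statement is the standard Hilbert-space characterization of the Riesz--Fischer property, so the plan is to prove it by pure functional analysis in $H = L^2(S)$. Write $e_\lambda(t) = e^{2\pi i \lambda t}$ for the exponentials in question, and recall that $\E(\Lambda)$ is Riesz--Fischer in $L^2(S)$ precisely when the moment problem $\langle f, e_\lambda\rangle = c_\lambda$ $(\lambda \in \Lambda)$ admits a solution $f \in L^2(S)$ for every $\{c_\lambda\} \in \ell^2(\Lambda)$. The displayed inequality is exactly a lower bound for the synthesis map $\{c_\lambda\} \mapsto \sum c_\lambda e_\lambda$ on finitely supported sequences (note that no upper/Bessel bound is assumed, so this synthesis map is not a priori bounded). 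The two directions of the equivalence are then the two halves of a duality between that synthesis map and the moment problem.

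For the sufficiency of the inequality, I would fix $\{c_\lambda\}\in\ell^2(\Lambda)$, let $M$ be the closed linear span of $\{e_\lambda\}$ in $L^2(S)$, and on the dense subspace of finite combinations define the linear functional $\phi\big(\sum a_\lambda e_\lambda\big) = \sum a_\lambda \overline{c_\lambda}$. The hypothesis makes $\phi$ well defined (if $\sum a_\lambda e_\lambda = 0$ then the inequality forces all $a_\lambda = 0$) and, by Cauchy--Schwarz combined with the inequality, bounded with $\|\phi\| \leq \sqrt{C}\,\|\{c_\lambda\}\|_{\ell^2}$. Extending $\phi$ to $M$ and applying the Riesz representation theorem yields $f \in M \subset L^2(S)$ with $\phi(g) = \langle g, f\rangle$; taking $g = e_\lambda$ gives $\langle f, e_\lambda\rangle = c_\lambda$, so $\E(\Lambda)$ is Riesz--Fischer.

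For the converse I would extract the uniform constant $C$ from the abstract solvability by a closed-graph argument, which I expect to be the only delicate point. Let $R\colon \ell^2(\Lambda) \to M$ send $\{c_\lambda\}$ to the unique minimal-norm solution of the moment problem lying in $M$ (uniqueness holds because two solutions differ by an element of $M$ orthogonal to every $e_\lambda$, hence zero). The graph of $R$ is closed: if $c^{(k)} \to c$ in $\ell^2$ and $Rc^{(k)} \to f$ in $L^2(S)$, then testing against each fixed $e_\lambda$ shows $\langle f, e_\lambda\rangle = c_\lambda$ and $f \in M$, whence $f = Rc$. Therefore $R$ is bounded, say $\|Rc\| \leq \sqrt{C}\,\|c\|_{\ell^2}$.

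Finally I would combine the pieces: for a finite sequence $\{c_\lambda\}$ set $g = \sum_\lambda c_\lambda e_\lambda$ and $f = R(\{c_\lambda\})$, so that
\[
\sum_\lambda |c_\lambda|^2 = \sum_\lambda \overline{c_\lambda}\,\langle f, e_\lambda\rangle = \langle f, g\rangle \leq \|f\|\,\|g\| \leq \sqrt{C}\,\Big(\sum_\lambda |c_\lambda|^2\Big)^{1/2}\Big(\int_S |g|^2\Big)^{1/2}.
\]
Dividing by $\big(\sum_\lambda |c_\lambda|^2\big)^{1/2}$ and squaring yields the asserted inequality with the same constant $C$. The whole argument is a textbook Hilbert-space duality; the one place requiring genuine care is the passage from ``a solution exists for each datum'' to ``a single constant $C$ works for all data,'' which is exactly where the closed-graph (equivalently, open-mapping) theorem is needed, and where completeness of $L^2(S)$ is used.
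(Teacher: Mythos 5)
Your proof is correct. The paper does not actually prove this lemma---it is quoted from \cite{young}, p.~155---and your argument (extension of a bounded functional plus Riesz representation for sufficiency; a minimal-norm solution operator made bounded via the closed graph theorem for necessity, then duality $\sum|c_\lambda|^2=\langle f,g\rangle$) is essentially the standard proof given in that reference, with the genuinely delicate points (well-definedness of the functional, closedness of the graph, the conjugations in the pairing) all handled correctly.
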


\subsection{}
Here we give the proof of Lemma \ref{lemma:duality}. It is based on the
proof from \cite{matei-meyer-quasicrystals, matei-meyer-simple} but we
find it useful to provide the reader with a self contained proof. Below
we will assume that $U$ is a left semi-closed multiband set, but as we
have remarked one can easily adapt the proof to the case when $U$ is
right semi-closed.

\begin{proof}[Proof of Part (i) of Lemma \ref{lemma:duality}]
Suppose that $\E(\lam{V})$ is a frame in the space $L^2(U)$.
It is to be proved that $\E(-\lam{U})$ is a Riesz-Fischer system in $L^2(V)$.
By Lemma \ref{lemma:interpolation} it will be enough to show that, 
if $f$ is a trigonometric polynomial
\begin{equation}
\label{eq:polynomial}
f(t) = \sum_{n} c_n \, e^{- 2 \pi i n t}
\end{equation}
such that $c_n = 0$ unless $n \alpha \in U$, then
\begin{equation}
\label{eq:want-part-i}
\sum_{n} |c_n|^2 \leq C \int_{V} |f(t)|^2 \, dt.
\end{equation}
Given such a trigonometric polynomial $f$ we define
\[
F_\eps(t) = \sum_{n \in \Z} c_n \, \varphi_\eps (t - n \alpha),
\]
where $\varphi$ is any function as in the previous section. Since
$U$ is left semi-closed, it follows that $F_\eps$ is supported 
by $U$ if $\eps$ is sufficiently small. Since $\E(\lam{V})$ is
a frame in $L^2(U)$, there is a constant $C$ such that
\begin{equation}
\label{eq:sampling-interval}
\int_{U} |F_\eps(t)|^2 \, dt \leq C \sum_{n \in \lam{V}} |\ft{F}_\eps(n)|^2.
\end{equation}

Again we take the limit as $\eps \to 0$. Since $U$ is left semi-closed, Lemma
\ref{lemma:summation} implies that the left hand side of \eqref{eq:sampling-interval}
converges to $\sum |c_n|^2$. On the other
hand, it is easy to see that $\ft{F}_\eps(n) = f(n \alpha) \, \ft{\varphi}_\eps(n)$.
The right hand side of \eqref{eq:sampling-interval} can therefore be written as
\[
\sum_{n \in \Z} \big| f(n \alpha) \, \1_{V}(n \alpha) \, \ft{\varphi}_\eps(n) \big|^2,
\]
and by Lemma \ref{lemma:uniform-distribution} it converges to $\int_{V}
|f(t)|^2 \, dt$ as $\eps \to 0$. This proves the first part of Lemma 
\ref{lemma:duality}.
\end{proof}

\begin{proof}[Proof of Part (ii) of Lemma \ref{lemma:duality}]
Suppose that $\E(\lam{V})$ is a Riesz-Fischer system in $L^2(U)$.
It is to be proved that $\E(-\lam{U})$ is a frame in $L^2(V)$,
that is, we must show that the inequality
\begin{equation}
\label{eq:want-part-ii}
\int_{V} |f(t)|^2 \, dt \leq C \sum_{n \in \lam{U}} |\ft{f}(-n)|^2
\end{equation}
holds for every $f \in L^2(V)$. Since $V$ is a multiband set, it is actually
enough to verify \eqref{eq:want-part-ii} for every infinitely smooth
function $f$ supported by $V$. Given such $f$, define
\[
F_\eps(t) := \sum_{n \in \Z} f(n \alpha) \, 
\ft{\varphi}_\eps(n) \exp 2 \pi i n t, \quad t \in \T.
\]
The fact that $f$ is supported by $V$ implies that only exponentials
from $\E(\lam{V})$ have their coefficient non-zero in the series above.
Since $\E(\lam{V})$ is a Riesz-Fischer system in $L^2(U)$, it follows
from Lemma \ref{lemma:interpolation} that there is a constant $C$ such that
\begin{equation}
\label{eq:interpolation-interval}
\sum_{n \in \Z} \big| f(n \alpha) \, \ft{\varphi}_\eps(n) \big|^2
\leq C \int_{U} |F_\eps(t)|^2 \, dt.
\end{equation}

Now we take the limit of \eqref{eq:interpolation-interval} as $\eps \to 0$.
Lemma \ref{lemma:uniform-distribution} implies that the left hand side
of \eqref{eq:interpolation-interval} tends to $\int |f(t)|^2 \, dt$.
On the other hand, substituting $f$ with its Fourier expansion in the
definition of $F_\eps$, it is easy to see that
\[
F_\eps(t) = \sum_{n \in \Z} \ft{f}(-n) \, \varphi_\eps (t - n \alpha).
\]
The coefficients $\{\ft{f}(-n)\}$ belong to $\ell^1$, since $f$ is smooth.
Since $U$ is left semi-closed we may use Lemma \ref{lemma:summation},
which implies that the limit as $\eps \to 0$ of the right hand side of
\eqref{eq:interpolation-interval} is equal to the right hand side of
\eqref{eq:want-part-ii}. This proves the second part of Lemma \ref{lemma:duality}.
\end{proof}


\section{Exponential Riesz bases on multiband sets}
\label{section:riesz-bases}

\subsection{}
In this section we prove Theorem \ref{thm:main-yes}. Let $I$ be an interval
on $\T$ which is either left or right semi-closed. We will show that if the
(necessary) diophantine condition $|I| \in \zalpha$ is satisfied, then the
exponential system $\E(\lam{I})$ is a universal Riesz basis in $L^2(S)$ for
a family of multiband sets $S$. In fact we will prove a somewhat more general
result than formulated in the introduction.

\begin{theorem}
\label{thm:bases-exist}
Suppose that $|I| \in \zalpha$. Then $\E(\lam{I})$ is a Riesz basis in 
$L^2(S)$ for every set $S \subset \T$, $\mes S = |I|$, which satisfies
the following condition: the indicator function $\1_S$ can be expressed
as a finite linear combination of indicator functions of intervals $I_1,
\dots, I_N$ whose lengths belong to $\zalpha$, that is,
\begin{equation}
\label{eq:linear-combination}
\1_S(t) = \sum_{j=1}^{N} c_j \, \1_{I_j}(t), \quad c_j \in \Z,
\quad |I_j| \in \zalpha \quad (1 \leq j \leq N).
\end{equation}
\end{theorem}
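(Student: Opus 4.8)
The plan is to apply Meyer's duality to pass from the multiband set $S$ to the single interval $I$, and then to produce the Riesz basis on $L^2(I)$ from a perturbation theorem of Avdonin \cite{avdonin}, feeding in Hecke's discrepancy estimate \cite{hecke} to verify its hypotheses.

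First I would reduce to a single interval. Since conjugation $f \mapsto \overline{f}$ is an antilinear isometry of $L^2(S)$ carrying $\E(\Lambda)$ to $\E(-\Lambda)$, the system $\E(\lam{I})$ is a Riesz basis in $L^2(S)$ if and only if $\E(-\lam{I})$ is. By Corollary \ref{cor:duality}, applied with the semi-closed interval $U = I$ and the multiband set $V = S$ (recall $S$ is a finite union of intervals by \eqref{eq:linear-combination}), it therefore suffices to prove that $\E(\lam{S})$ is a Riesz basis in $L^2(I)$. This is now a problem on a single interval, where classical nonharmonic Fourier series are available.

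Next I would set up Avdonin's theorem. Writing $\beta = |I|$ and rescaling $I$ to unit length, $\E(\lam{S})$ becomes the system $\{e^{2\pi i \beta \mu_k t}\}$ on $L^2(0,1)$, where $\{\mu_k\}_{k \in \Z}$ is the increasing enumeration of $\lam{S} \subset \Z$; being distinct integers, the $\mu_k$ are separated. From \eqref{eq:linear-combination} the counting function of $\lam{S}$ in $[0,N)$ equals $\sum_j c_j \nu(N, I_j) = N \sum_j c_j |I_j| + \sum_j c_j D(N, I_j)$. Since $\sum_j c_j |I_j| = \mes S = \beta$ and each $|I_j| \in \zalpha$, Hecke's theorem gives $D(N, I_j) = O(1)$, so the counting function is $N\beta + O(1)$ uniformly. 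Inverting this, the perturbation $\delta_k := \beta \mu_k - k$ is bounded; in fact $\delta_k = -g(\mu_k)$ with $g(N) := \sum_j c_j D(N, I_j)$. Avdonin's extension of Kadec's $1/4$ theorem then yields the Riesz basis property once I check that the averages of $\delta_k$ over blocks of consecutive indices all lie within $1/4$ of a single constant, for some fixed block length.

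The hard part, and the place where the diophantine hypothesis is really used, is verifying this ``$1/4$ in the mean'' condition; mere boundedness of $g$ from Hecke is not enough. The idea is that $|I_j| \in \zalpha$ makes $\1_{I_j} - |I_j|$ a coboundary for the rotation $x \mapsto x + \alpha$, i.e.\ $D(N, I_j) = g_j(0) - g_j(N\alpha)$ for a bounded step function $g_j$ (this is the mechanism behind Hecke's theorem). Hence $\delta_k = -C_0 + G(\mu_k \alpha)$ with $G := \sum_j c_j g_j$ of bounded variation and $C_0$ a constant. Since the $\mu_k \alpha$ are exactly the points of the orbit $\{n\alpha\}$ lying in $S$, the well-distribution of $\{n\alpha\}$ makes them uniformly equidistributed in $S$; consequently the average of $\delta_k$ over any block of $p$ consecutive indices converges, as $p \to \infty$ and uniformly in the block's position, to $-C_0 + \beta^{-1} \int_S G$. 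Taking $p$ large makes every block average within $1/4$ of this constant, verifying Avdonin's hypotheses and completing the proof. I expect essentially all the difficulty to sit in this last step: upgrading Hecke's pointwise bound to the uniform convergence of long-block averages via the coboundary structure available precisely when $|I_j| \in \zalpha$.
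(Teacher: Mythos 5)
Your proposal is correct and follows essentially the same route as the paper's proof: Meyer's duality (Corollary \ref{cor:duality}) to reduce to showing $\E(\lam{S})$ is a Riesz basis on a single interval, the coboundary structure afforded by $|I_j| \in \zalpha$ (the paper's Lemma \ref{lemma:g-exists}, which is exactly the ``Hecke mechanism'' you invoke) to write the perturbations as $\delta_k = G(\mu_k \alpha) + \mathrm{const}$ with $G$ a bounded step function, and well-distribution of $\{n\alpha\}$ (cited from Meyer's book) to get the uniform block-average bound required by Avdonin's theorem. The only cosmetic differences are that the paper applies the duality with $U = -I$ rather than your conjugation argument, and builds $G$ explicitly as a finite sum of sawtooth functions instead of quoting Hecke's discrepancy bound.
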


Condition \eqref{eq:linear-combination} is certainly satisfied
if $S$ is the union of finitely many \emph{disjoint} intervals
with lengths in $\zalpha$. However, notice that other configurations
are also possible. For example, consider the set $S$ of the form
\[
S = I_1 \setminus I_2, \quad I_2 \subset I_1, \quad |I_1|,|I_2| \in \zalpha,
\]
which certainly satisfies the condition \eqref{eq:linear-combination},
but which is the union of two disjoint intervals whose lengths do not
necessarily belong to $\zalpha$.

\subsection{}
We may suppose, with no loss of generality, that $S$ is a left
semi-closed multiband set, or, equivalently, that the intervals
$I_j$ in the representation \eqref{eq:linear-combination} are
left semi-closed. The condition \eqref{eq:linear-combination}
plays its key role in the following lemma.

\begin{lemma}
\label{lemma:g-exists}
Let $S$ be a left semi-closed multiband set satisfying \eqref{eq:linear-combination}.
Then there is a bounded function $g: \T \to \R$, continuous from the right and
with finitely many jump discontinuities, such that
\begin{equation}
\label{eq:g-exists}
\1_{S}(t) - \mes S = g(t) - g(t + \alpha), \quad t \in \T.
\end{equation}
\end{lemma}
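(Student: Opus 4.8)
The plan is to solve the cohomological equation \eqref{eq:g-exists} explicitly, representing the mean-zero function $\1_S - \mes S$ as a telescoping difference $g(t) - g(t+\alpha)$ assembled from sawtooth functions; the diophantine hypothesis \eqref{eq:linear-combination} will enter precisely to make this telescoping \emph{finite}, which is what keeps $g$ bounded. First I would reduce to a single interval. Integrating \eqref{eq:linear-combination} over $\T$ gives $\mes S = \sum_j c_j |I_j|$, so that
\[
\1_S(t) - \mes S = \sum_{j=1}^{N} c_j \bigl( \1_{I_j}(t) - |I_j| \bigr).
\]
By linearity it then suffices to produce, for each $j$, a bounded right-continuous function $g_j$ with finitely many jumps satisfying $\1_{I_j}(t) - |I_j| = g_j(t) - g_j(t+\alpha)$, and to set $g = \sum_j c_j g_j$; a finite integer combination of such functions is again bounded, right-continuous and has finitely many jumps. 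Thus I may assume $S = I = [a,b)$ is a single left semi-closed interval with $|I| = b - a \in \zalpha$.

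Next I would record an elementary sawtooth identity. Put $\psi(x) = \{x\} - \tfrac12$, the fractional part centered at zero, regarded as a function on $\T$; it is bounded by $\tfrac12$, continuous from the right, and has a single jump per period. Comparing slopes (both sides are locally constant off the endpoints), jumps (a drop of $1$ at $t \equiv a$ and a rise of $1$ at $t \equiv b$), and means (both sides integrate to $0$), one checks the pointwise identity
\[
\1_{[a,b)}(t) - (b-a) = \psi(t-b) - \psi(t-a), \qquad t \in \T,
\]
valid including the endpoints because $\1_{[a,b)}$ and $\psi$ share the right-continuity convention; this is exactly the reason for taking $S$ left semi-closed.

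The heart of the matter, and the only place the diophantine condition is used, is the telescoping step. Since $b - a = |I| \in \zalpha$, we have $b \equiv a + p\alpha \pmod 1$ for some integer $p$: the two endpoints lie in a single $\alpha$-orbit and are joined in $|p|$ steps of the rotation. For $p > 0$ this lets me write $\psi(t-b) - \psi(t-a) = \psi(t-a-p\alpha) - \psi(t-a)$ as $g_j(t) - g_j(t+\alpha)$ with the finite sum
\[
g_j(t) = \sum_{k=1}^{p} \psi(t - a - k\alpha),
\]
since under the shift $t \mapsto t+\alpha$ all terms cancel except the extreme ones, leaving $\psi(t-a-p\alpha) - \psi(t-a)$. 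The cases $p < 0$ and $p = 0$ are handled by the evident variants (reversing the range and sign, respectively taking $g_j \equiv 0$). Each $g_j$ is a finite sum of sawtooths, hence bounded, right-continuous, and with finitely many jumps; assembling $g = \sum_j c_j g_j$ then yields \eqref{eq:g-exists}.

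The main obstacle is conceptual rather than computational: everything hinges on the telescoping sum terminating, and this is guaranteed only by $|I_j| \in \zalpha$. Without this hypothesis the endpoints would not lie in a common finite orbit segment, and one would be driven to an infinite series $\sum_{k \geq 1} \psi(t - a - k\alpha)$, which for a generic length need not represent a bounded function. This is the analytic shadow of the Hecke--Kesten dichotomy underlying Theorems \ref{thm:main-no} and \ref{thm:main-kesten}, and it is why the diophantine condition is the exact dividing line in the present lemma.
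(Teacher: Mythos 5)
Your proof is correct and takes essentially the same route as the paper's: reduce by linearity to a single interval of length in $\zalpha$, then solve the cohomological equation by a telescoping sum of sawtooth functions along the $\alpha$-orbit, identifying the telescoped difference with $\1_S - \mes S$ by matching slopes, jumps and means (the paper uses the uncentered sawtooth $\theta(t)=\{t\}$ and rotates the interval so its left endpoint is $0$, but these differences are cosmetic). One small slip worth fixing: $\1_{[a,b)}$ has a \emph{rise} of $1$ at $t \equiv a$ and a \emph{drop} of $1$ at $t \equiv b$, not the reverse as your parenthetical states, though the identity $\1_{[a,b)}(t)-(b-a)=\psi(t-b)-\psi(t-a)$ and the rest of the argument are correct.
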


\begin{proof}
According to \eqref{eq:linear-combination} the indicator function $\1_S$
is a linear combination of the functions $\1_{I_j}$ $(1 \leq j \leq N)$.
As equation \eqref{eq:g-exists} is linear as well, it will be enough
to prove the lemma in the case when $S$ is each one of the intervals
$I_j$. In other words, we may suppose that $S$ is a single interval whose
length belongs to $\zalpha$. Moreover, by rotation, we may suppose that
$S$ is an interval whose left endpoint is zero.

Let therefore $S = [0, n \alpha)$, where $n \in \Z$. For simplicity 
we shall suppose that $n$ is a positive integer, as the case when $n$ is negative 
is very similar. Let us denote by $\theta(t)$ the $1$-periodic function on $\R$,
defined by $\theta(t) = t$ for $0 \leq t < 1$. Considered as a function on the
circle $\T$, this is a piecewise linear function, with slope $+1$, and with a jump
discontinuity of magnitude $-1$ at $t=0$. Set
\[
g(t) := \sum_{k=1}^{n} \theta(t - k \alpha), \quad t \in \T,
\]
then $g$ is a bounded function, continuous from the right, and with finitely
many jump discontinuities. We have
\[
g(t) - g(t + \alpha) = \theta(t - n \alpha) - \theta(t).
\]
Observe that the function on the right hand side has the following properties:
it has a jump of magnitude $+1$ at $t = 0$ and another jump of magnitude $-1$ at
$t = n \alpha$, it has derivative zero at all other points, and has zero integral
on $\T$. These properties determine the function uniquely as $\1_S(t) - \mes S$,
and so \eqref{eq:g-exists} is established.
\end{proof}

\begin{remark*}
If $S$ is a multiband set, then the condition \eqref{eq:linear-combination}
is not only sufficient, but also necessary, for the existence of a bounded measurable
function $g(t)$ satisfying \eqref{eq:g-exists}. This is a consequence of a result
due to Oren \cite{oren} (see also \cite{schoissengeier}).
\end{remark*}

\subsection{}
We turn to the proof of Theorem \ref{thm:bases-exist}. The first step in the
proof, based on Meyer's duality principle, is to reduce the problem about
exponential Riesz bases in $L^2(S)$ to a similar problem in $L^2(I)$, where
$I$ is a single interval. This allows us then to use a theorem of Avdonin
\cite{avdonin} on exponential Riesz bases in $L^2(I)$. Below we formulate
a special case of Avdonin's theorem, in a form which will be convenient
in our setting.

\begin{theorem}[Avdonin \cite{avdonin}]
\label{thm:avdonin}
Let $I \subset \R$ be a bounded interval, and let
\begin{equation}
\label{eq:avdonin}
\lambda_j = \frac{j + \delta_j + c}{|I|}, \quad j \in \Z,
\end{equation}
where $c$ is a constant, and $\{\delta_j\}$ is a bounded sequence
of real numbers. Suppose that there is a positive integer $N$ such that
\begin{equation}
\label{eq:avdonin-condition}
\sup_{n \in \Z} \; \Big| \frac1{N} \sum_{j=1}^{N} \delta_{n+j} \Big| < \frac1{4}.
\end{equation}
If the sequence $\Lambda = \{\lambda_j, \; j \in \Z\}$ is separated
then $\E(\Lambda)$ is a Riesz basis in $L^2(I)$.
\end{theorem}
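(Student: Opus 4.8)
The plan is to reduce the statement to a perturbation of an orthonormal basis and then to verify the Riesz basis property through the theory of generating functions of sine type. First I would normalize. Replacing $t$ by $t/|I|$ scales the interval to unit length and multiplies every frequency by $|I|$, turning $\lambda_j$ into $j+\delta_j+c$; then modulating by $e^{-2\pi i c t}$, a unitary operator on $L^2$ of the interval, removes the constant $c$. Thus it suffices to treat $\lambda_j = j + \delta_j$ on $L^2(-\tfrac12,\tfrac12)$, where $\{\delta_j\}$ is bounded and separated and the averaging bound \eqref{eq:avdonin-condition} holds. The unperturbed system $\{e^{2\pi i j t}\}$ is an orthonormal basis of this space, and a \emph{constant} shift of all frequencies would merely modulate it; the real content is that a perturbation whose \emph{local} averages stay below $1/4$ cannot destroy the basis property, even when the individual $\delta_j$ are large.

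Next I would pass to the generating function. Let $F$ be the entire function of exponential type $\pi$ whose zeros are exactly the points $\lambda_j$ (simple, by separation), defined through a symmetrically regularized canonical product, and compare it to $\sin\pi z$, whose zeros are the integers. By the Levin--Golovin theorem, if $F$ is of sine type---that is, of exponential type $\pi$ with $|F|$ bounded above and below on some horizontal line $\{\Im z = h\}$---and its zeros are separated, then $\E(\Lambda)=\{e^{2\pi i\lambda_j t}\}$ is a Riesz basis in $L^2(-\tfrac12,\tfrac12)$. Since separation is assumed, the entire problem reduces to establishing the two-sided bound for $|F|$ on a horizontal line.

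This estimate is the heart of the matter, and it is where hypothesis \eqref{eq:avdonin-condition} enters. Writing $\tfrac{F(z)}{\sin\pi z}=\prod_j\bigl(1-\tfrac{\delta_j}{z-j}\bigr)$ and taking logarithms, one must control $\Re\sum_j\log\bigl(1-\tfrac{\delta_j}{z-j}\bigr)$ uniformly on a line $z=x+ih$. The leading term is $-\Re\sum_j\tfrac{\delta_j}{x+ih-j}$, a convolution of $\{\delta_j\}$ against a kernel decaying like $1/|x-j|$, so its boundedness is not pointwise but relies on cancellation. Here I would apply summation by parts, replacing partial sums of $\delta_j$ by the running averages of length $N$ appearing in \eqref{eq:avdonin-condition}: since consecutive differences of the kernel are absolutely summable, the bound $<1/4$ on those averages transfers to a uniform bound on the sum, and the constant $1/4$ is exactly what keeps the estimate below the threshold at which $|F|$ could vanish on the line. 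The higher-order terms of the logarithm are absorbed using boundedness of $\{\delta_j\}$ together with separation, which keeps each factor $1-\delta_j/(z-j)$ away from $0$ on the line. Carrying out this summation-by-parts estimate with explicit constants---tracking the sharp $1/4$ through the kernel sums---is the main obstacle; everything else is standard theory of canonical products and sine-type functions.

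As an alternative one could follow the Paley--Wiener stability route used for Kadec's theorem, bounding the perturbation operator $\sum c_j e^{2\pi i j t}\mapsto\sum c_j\bigl(e^{2\pi i\lambda_j t}-e^{2\pi i j t}\bigr)$ by a constant strictly below $1$. The direct Kadec expansion fails because it demands $\sup_j|\delta_j|<1/4$ pointwise, whereas here the $\delta_j$ are merely bounded; one would instead split the perturbation into a slowly varying part, governed by the averages in \eqref{eq:avdonin-condition}, and a part of mean zero over blocks of length $N$, treated by a finite-dimensional stability argument on each block. This is essentially dual to the generating-function computation and meets the same sharp-constant bookkeeping.
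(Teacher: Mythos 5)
First, note that the paper itself gives no proof of this statement: it is quoted as a known theorem of Avdonin \cite{avdonin}, so your sketch can only be measured against the standard proofs in the literature (Avdonin's original argument, or the modern route through the Hru\u{s}\u{c}ev--Nikol'skii--Pavlov machinery). Measured that way, your central reduction is not merely incomplete but false. You claim it suffices to show that the generating function $F$ is of sine type, i.e.\ that $|F|$ is bounded above and below on a horizontal line, and that condition \eqref{eq:avdonin-condition} delivers this via summation by parts. It does not, and cannot: take $\delta_j=\delta\,\operatorname{sign}(j)$ with $0<\delta<\tfrac14$, which satisfies \eqref{eq:avdonin-condition} (indeed even Kadec's pointwise condition). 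Then $\log|F(x+ih)| = -\Re\sum_j \delta_j/(x+ih-j)+O(1)$, and the signed kernel sums grow logarithmically, giving $|F(x+ih)|\asymp |x|^{-2\delta}\to 0$. Small \emph{averages} of a bounded sequence convolved against the $1/(x-j)$ kernel yield $\bmo$ control of $\log|F|$ on the line, never $L^\infty$ control --- exactly the distinction this paper is built around ($\log n\in\bmo$ but is unbounded). So the Levin--Golovin theorem is the wrong tool: under Avdonin's hypotheses the generating function is in general \emph{not} of sine type, and the estimate you call ``the heart of the matter'' fails for elementary examples.

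The correct criterion is the one the paper itself quotes in Section \ref{section:necessity}: by Theorem \ref{thm:pavlov}, $\E(\Lambda)$ is a Riesz basis in $L^2(I)$ if and only if, beyond separation and a $\bmo$ condition, the weight $|F(x+ih)|^2$ satisfies a Muckenhoupt $(A_2)$ condition, equivalently admits a Helson--Szeg\H{o} representation $c+\widetilde{u}+v$ with $\|v\|_\infty<\tfrac14$. The constant $\tfrac14$ in \eqref{eq:avdonin-condition} is the $A_2$/Helson--Szeg\H{o} threshold (in the power-weight example above, $|F|^2\asymp|x|^{-4\delta}$ is an $A_2$ weight precisely when $\delta<\tfrac14$), not a threshold below which $|F|$ stays bounded away from $0$; the actual work in Avdonin's proof is to show that averaging over blocks of length $N$ produces this decomposition with the bounded conjugate part of norm below $\tfrac14$. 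Your alternative Paley--Wiener route has the same defect in softer form: the ``slowly varying plus block-mean-zero'' splitting still leaves individual shifts of size $\sup_j|\delta_j|$, which can exceed any pointwise Kadec threshold, and no finite-dimensional per-block stability argument controls the inter-block interactions --- if it did, the sharp constant $\tfrac14$ would play no role. To repair the proof you must replace the sine-type/boundedness claim by the $(A_2)$ criterion of Theorem \ref{thm:pavlov} and run your summation-by-parts estimate at the level of $\bmo$ norms and the Helson--Szeg\H{o} decomposition.
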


Here the separation condition means that 
$\inf_{n \neq m} |\lambda_n - \lambda_m| > 0$.

Note that the famous Kadec $1/4$ theorem (see \cite{young})
corresponds to the special case when $N=1$ in Avdonin's theorem.

\begin{proof}[Proof of Theorem \ref{thm:bases-exist}]
Let $S$ be a multiband set satisfying the condition 
\eqref{eq:linear-combination} and such that $\mes S = |I|$. 
We must prove that $\E(\lam{I})$ is a Riesz basis in $L^2(S)$.
Recall that $I$ is semi-closed by assumption. Hence by
Corollary \ref{cor:duality}, with $U = -I$ and $V = S$, it will
be enough to show that $\E(\lam{S})$ is a Riesz basis for $L^2(-I)$.

There is no loss of generality in assuming that $S$ is left semi-closed. 
Let us enumerate the set $\lam{S}$ in an increasing order,
\[
\lam{S} = \{\lambda_j, \; j \in \Z\}, \quad
\cdots < \lambda_{-1} < 0 \leq \lambda_0 < \lambda_1 < \lambda_2 < \cdots,
\]
and take $g$ to be the function from Lemma \ref{lemma:g-exists}.
By the definition \eqref{eq:def-lam} of $\lam{S}$ and according to 
\eqref{eq:g-exists}, for any two integers $m < n$ we have
\[
\# (\lam{S} \cap [m, n)) = \sum_{k=m}^{n-1} \1_S(k \alpha)
= (n-m) \mes S + g(m \alpha) - g(n \alpha).
\]
Using this with $m=0$, $n = \lambda_j$ $(j \geq 0)$ or with
$m = \lambda_j$, $n=0$ $(j < 0)$ we get
\[
j = \lambda_j \, \mes S + g(0) - g(\lambda_j \alpha), \quad j \in \Z.
\]
Since $\mes S = |I|$, this implies \eqref{eq:avdonin}
with $\delta_j = g(\lambda_j \alpha)$ and $c = - g(0)$.

Observe that the perturbations $\{\delta_j\}$ are bounded, since $g$
is bounded. We may assume, by adding a constant to $g$ if necessary,
that $\int_{S} g(t) \, dt = 0$. Hence
\[
\lim_{m \to \infty} \; 
\frac1{m} \sum_{k=n+1}^{n+m} g(k \alpha) \1_{S}(k \alpha) = 0,
\]
uniformly with respect to $n \in \Z$
(see \cite{meyer-algebraic}, Chapter V, \textsection 6.3).
It is then easy to see that \eqref{eq:avdonin-condition} holds for a
sufficiently large $N$. Moreover, the sequence $\lam{S}$ is clearly
separated, as the elements $\lambda_j$ are distinct integers. So the
proof is concluded by Theorem \ref{thm:avdonin}.
\end{proof}

\begin{proof}[Proof of Corollary \ref{cor:main-yes}]
Let $|I| \in \zalpha$. First we claim that, being given any $\eps > 0$,
one can find real numbers $\gamma_1, \dots, \gamma_s$ in the segment
$(0, \eps)$, such that $\gamma_j \in \zalpha$ for each $j$,
and $\sum \gamma_j = |I|$. Indeed, since $\zalpha$ is a dense
subset of $\R$, it would be possible to choose numbers
$\gamma_1, \dots, \gamma_{s-1} \in (0, \eps) \cap (\zalpha)$
such that the difference
\begin{equation}
\label{eq:difference-gammas}
|I| - \sum_{j=1}^{s-1} \gamma_j
\end{equation}
lies in $(0, \eps)$, and then define $\gamma_s$ to be the value
of \eqref{eq:difference-gammas}, which is also in $\zalpha$.

Let now $U \subset \T$ be an open set, $K$ be compact, $K \subset U$
and $\mes K < |I| < \mes U$. If $\eps$ is sufficiently small then $K$
may be covered by disjoint intervals $I_1, \dots, I_s$ contained in
$U$, such that $|I_j| = \gamma_j$. Taking $S := I_1 \cup \cdots \cup I_s$
we have $K \subset S \subset U$, $\mes S = |I|$, and $\E(\lam{I})$
is a Riesz basis in $L^2(S)$ by Theorem \ref{thm:main-yes}.
\end{proof}


\section{Ergodic sums, BMO and Kesten's theorem}
\label{section:ergodic}

In this section we study the bounded mean oscillation of ergodic sums,
first in an abstract setting, and then for the irrational rotation of
the circle. In particular we will prove an extension of Theorem
\ref{thm:main-kesten}. The result obtained will then allow us to prove
Theorem \ref{thm:main-no} in Section \ref{section:necessity}.

\subsection{}
It will be convenient to start with an abstract setting. Let $H$
be a Hilbert space, and $U$ be a unitary operator on $H$. Given
a vector $f \in H$ we consider its `ergodic sums' defined by
\[
S_n := f + Uf + \cdots + U^{n-1}f.
\]

The von Neumann ergodic theorem asserts that the ratios $S_n / n$ converge
to the projection of $f$ onto the closed subspace of $U$-invariant vectors.
In particular, $f$ is perpendicular to this subspace if and only if 
$\|S_n\| = o(n)$ as $n \to \infty$.

A vector $f$ is called a \emph{coboundary} if there exists $g \in H$
such that $f = g - Ug$. In this case the ergodic sums have the form
$S_n = g - U^n g$. This, of course, implies the boundedness of the ergodic
sums, $\|S_n\| = O(1)$. However, it was proved by Robinson \cite{robinson}
that the latter condition is not only necessary but also sufficient for $f$
to be a coboundary. In fact, as the proof in \cite{robinson} shows, the
condition $(1/N) \sum_{n=1}^{N} \|S_n\|^2 = O(1)$ implies
that $f$ is a coboundary.

We will prove the following 

\begin{theorem}
\label{eq:thm-bounded-variance}
For a vector $f$ to be a coboundary it is necessary and sufficient 
that the numbers $V_N$ (so-called `variances') defined by
\begin{equation}
\label{eq:variance}
V_N := \frac1{N} \sum_{n=1}^{N} \Big\|S_n - \frac{S_1 + \dots + S_N}{N} \Big\|^2
\end{equation}
are bounded for $N=1,2,3,\dots$\,.
\end{theorem}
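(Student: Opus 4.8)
The plan is to pass, via the spectral theorem, to a concrete $L^2$-model in which both the coboundary property of $f$ and the variances $V_N$ become explicit integrals against the spectral measure of $f$.

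First I would reduce to the cyclic subspace $H_f = \overline{\operatorname{span}}\{U^k f : k \in \Z\}$. Since $U$ is unitary, both $H_f$ and its orthogonal complement are $U$-invariant, so if $f = g - Ug$ then replacing $g$ by its orthogonal projection onto $H_f$ still solves the equation; hence there is no loss in working inside $H_f$. On $H_f$ the spectral theorem furnishes a finite positive measure $\mu$ on the unit circle $\{z \in \C : |z| = 1\}$ together with a unitary identification $H_f \cong L^2(\mu)$ under which $f \mapsto \1$, $U$ becomes multiplication by $z$, and hence $U^k f \mapsto z^k$ and $\|h\|^2 = \int |h|^2 \, d\mu$. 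The equation $f = g - Ug$ then reads $1 = (1-z)\, h(z)$, so $f$ is a coboundary if and only if $(1-z)^{-1} \in L^2(\mu)$, i.e.
\[
\int_{|z|=1} \frac{d\mu(z)}{|1-z|^2} < \infty,
\]
a condition I will refer to as $(\ast)$.

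Next I would express $V_N$ as an integral against $\mu$. Expanding the square in \eqref{eq:variance} gives the elementary identity $V_N = \frac{1}{N}\sum_{n=1}^{N}\|S_n\|^2 - \|\bar{S}_N\|^2$, where $\bar{S}_N = \frac{1}{N}\sum_{n=1}^{N} S_n$. Since $S_n$ corresponds to $\sum_{k=0}^{n-1} z^k = (1 - z^n)/(1-z)$ in $L^2(\mu)$, the integrand equals $|1-z|^{-2}$ times the empirical variance of the sample $\{1 - z^n\}_{n=1}^{N}$. As the variance of a sample is unchanged by translation by the constant $1$ and by multiplication by $-1$, this is the variance of $\{z^n\}_{n=1}^{N}$, which, because $|z^n| = 1$, equals $1 - |\bar{z}_N(z)|^2$ with $\bar{z}_N(z) = \frac{1}{N}\sum_{n=1}^{N} z^n$. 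Therefore
\[
V_N = \int_{|z|=1} \frac{1 - |\bar{z}_N(z)|^2}{|1-z|^2}\, d\mu(z) =: \int_{|z|=1} K_N(z)\, d\mu(z).
\]

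Finally I would read off both directions from this formula. Since $|\bar{z}_N(z)| \leq 1$ one has $0 \leq K_N(z) \leq |1-z|^{-2}$ pointwise, while for each fixed $z \neq 1$ the bound $|\bar{z}_N(z)| = |1 - z^N|/(N|1-z|) \to 0$ shows that $K_N(z) \to |1-z|^{-2}$ (the limit holding, as $+\infty$, at $z = 1$ as well). For the necessity direction, if $(\ast)$ holds then $|1-z|^{-2}$ is $\mu$-integrable and dominates every $K_N$, so $V_N \leq \int_{|z|=1} |1-z|^{-2}\, d\mu < \infty$ uniformly in $N$. For the sufficiency direction, Fatou's lemma applied to the nonnegative functions $K_N$ yields $\int_{|z|=1} |1-z|^{-2}\, d\mu = \int_{|z|=1} \lim_N K_N\, d\mu \leq \liminf_N \int_{|z|=1} K_N\, d\mu = \liminf_N V_N$, so boundedness of $\{V_N\}$ forces $(\ast)$ and hence makes $f$ a coboundary. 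The only substantive points are the spectral reduction and, above all, the variance identity collapsing the integrand to $(1 - |\bar{z}_N|^2)/|1-z|^2$; once this explicit kernel, converging pointwise to $|1-z|^{-2}$, is in hand, the two directions follow immediately from domination and from Fatou, and present no real difficulty.
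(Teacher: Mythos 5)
Your proposal is correct and follows essentially the same route as the paper: your kernel $K_N$ is exactly the paper's $Q_N$ (writing $z = e^{2\pi i t}$, so that $|1-z|^2 = 4\sin^2\pi t$ and $|\bar z_N(z)|^2 = \frac{\sin^2 \pi N t}{N^2 \sin^2 \pi t}$, your identity $K_N = (1-|\bar z_N|^2)/|1-z|^2$ is precisely the formula for $Q_N$ established in the proof of Lemma \ref{lemma:variance}), and your domination-plus-Fatou endgame matches the paper's conclusion. The only difference is that where the paper cites Robinson's spectral characterization of coboundaries, you derive that criterion directly from the multiplication model on the cyclic subspace, which makes the argument self-contained but does not change its substance.
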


This result strengthens Robinson's theorem, as the assumption
$V_N = O(1)$ is weaker than the condition $(1/N) \sum_{n=1}^{N}
\|S_n\|^2 = O(1)$. This can be seen from the identity
\[
V_N = \frac1{N} \sum_{n=1}^{N} \big\| S_n \big\|^2 -
\Big\| \frac1{N} \sum_{n=1}^{N} S_n \Big\|^2.
\]

A key tool in \cite{robinson} as well as in our proof of Theorem
\ref{eq:thm-bounded-variance} is the \emph{spectral measure}. Since
the sequence $\{\dotprod{U^n f}{f}\}$ is positive definite, by Herglotz's
theorem (see \cite{katznelson}) there exists a positive, finite measure
$\mu_f$ on the circle $\T$, such that
\begin{equation}
\label{eq:spectral}
\int_{\T} e^{2\pi i n t} \; d\mu_f(t) = \dotprod{U^n f}{f}, \quad n \in \Z.
\end{equation}
The measure $\mu_f$ is called the spectral measure of $f$ with respect to $U$. 
In the paper \cite{robinson} the coboundaries were characterized also in terms
of the spectral measure, in the following way: $f$ is a coboundary if and only
if the integral
\[
\int_{\T} \, \frac{d\mu_f(t)}{\sin^2 \pi t}
\]
(with the integrand taking the value $+ \infty$ at $t=0$) is finite.
Theorem \ref{eq:thm-bounded-variance} is obtained by a combination
of this result and the following

\begin{lemma}
\label{lemma:variance}
For any $f \in H$ we have
\begin{equation}
\label{eq:limit-variance}
\lim_{N \to \infty} V_N = \frac1{4} \int_{\T} \frac{d\mu_f(t)}{\sin^2 \pi t} \; ,
\end{equation}
where the integral on the right hand side may be finite or infinite.
\end{lemma}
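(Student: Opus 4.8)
The plan is to diagonalize everything through the spectral measure $\mu_f$ of \eqref{eq:spectral} and thereby reduce the computation of the variances $V_N$ to a pointwise computation on $\T$. The key observation is that each $S_n - \frac1N(S_1 + \cdots + S_N)$ is a fixed finite linear combination $\sum_k a_k U^k f$ of the vectors $U^k f$, and for any such combination the identity \eqref{eq:spectral} gives $\|\sum_k a_k U^k f\|^2 = \int_\T |\sum_k a_k e^{2\pi i k t}|^2 \, d\mu_f(t)$. Applying this to each summand in \eqref{eq:variance} and exchanging the two finite sums with the integral, I would write
\[
V_N = \int_\T W_N(t)\, d\mu_f(t), \qquad W_N(t) = \frac1N \sum_{n=1}^N \Big| \sigma_n(t) - \frac1N\sum_{m=1}^N \sigma_m(t) \Big|^2,
\]
where $\sigma_n(t) = \sum_{k=0}^{n-1} e^{2\pi i k t}$ is the symbol of the ergodic sum $S_n$. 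Thus $W_N(t)$ is precisely the empirical variance of the complex numbers $\sigma_1(t), \dots, \sigma_N(t)$.

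Second, I would compute $W_N$ explicitly. For $t \neq 0$, summing the geometric series gives $\sigma_n(t) = (z^n - 1)/(z-1)$ with $z = e^{2\pi i t}$, so $\sigma_n(t) = a(z^n - 1)$ where $a = 1/(z-1)$. The additive constant does not affect the variance, hence $W_N(t)$ equals $|a|^2$ times the variance of $\{z^n\}_{n=1}^N$. Since $|z^n| = 1$, that variance is $1 - |\mu_N(t)|^2$, where $\mu_N(t) = \frac1N \sum_{m=1}^N e^{2\pi i m t}$, and using $|z-1|^2 = 4\sin^2 \pi t$ I obtain the closed form
\[
W_N(t) = \frac{1 - |\mu_N(t)|^2}{4 \sin^2 \pi t}, \qquad t \neq 0 .
\]
Two facts follow at once. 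Since $|\mu_N(t)| \leq 1$, one has the uniform bound $0 \leq W_N(t) \leq \frac{1}{4\sin^2\pi t}$; and for each fixed $t \neq 0$ one has $|\mu_N(t)| \leq \frac1N \cdot \frac{2}{|z-1|} \to 0$, so $W_N(t) \to \frac{1}{4\sin^2\pi t}$ pointwise. At $t = 0$ the symbol is $\sigma_n(0) = n$, whence $W_N(0) \to \infty$, matching the convention that the integrand is $+\infty$ there; so in fact $\lim_N W_N(t) = \frac{1}{4\sin^2\pi t}$ on all of $\T$.

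Finally I would pass to the limit inside the integral, splitting into two cases according to whether the right-hand side of \eqref{eq:limit-variance} is finite or infinite. If it is finite, then $\frac{1}{4\sin^2\pi t}$ is $\mu_f$-integrable and, by the uniform bound, dominates every $W_N$, so dominated convergence yields $V_N \to \frac14 \int_\T \frac{d\mu_f}{\sin^2\pi t}$. If it is infinite, then Fatou's lemma applied to the nonnegative $W_N$, together with the pointwise limit just established, forces $\liminf_N V_N \geq \frac14 \int_\T \frac{d\mu_f}{\sin^2\pi t} = \infty$, so $V_N \to \infty$ as well; in both cases \eqref{eq:limit-variance} holds. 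I expect the only delicate point to be the behaviour near $t = 0$, where the integrand blows up and $\mu_f$ may concentrate: this is exactly what makes the right-hand integral possibly infinite, and it is the reason I would separate the two cases rather than attempt dominated convergence throughout. The algebraic evaluation of $W_N$ and the uniform bound are routine; the convergence bookkeeping, handled by this case split, is what closes the argument.
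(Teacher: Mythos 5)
Your proposal is correct and follows essentially the same route as the paper: both diagonalize $V_N$ through the spectral measure to get $V_N = \int_\T Q_N \, d\mu_f$ with the same kernel $Q_N(t) = \frac{1}{4\sin^2\pi t}\bigl(1 - \frac{\sin^2 \pi N t}{N^2 \sin^2 \pi t}\bigr)$ (your $1 - |\mu_N(t)|^2$ computation via the variance of unimodular numbers is just a slicker way of doing the paper's algebra), and both close with the identical dominated convergence/Fatou case split. Your explicit verification of the pointwise limit, including the $t=0$ point, is a minor elaboration the paper leaves implicit, but nothing of substance differs.
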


\begin{proof}
It follows from \eqref{eq:spectral} that
\[
\|P(U)f\|^2 = \int_{\T} |P(e^{2 \pi i t})|^2 \, d\mu_f(t)
\]
for every polynomial $P(z) = c_0 + c_1 z + \cdots + c_n z^n$. In particular,
using \eqref{eq:variance} we can express the variance $V_N$ in terms of the
spectral measure in the form
\begin{equation}
\label{eq:kernel}
V_N = \int_{\T} Q_N(t) \; d\mu_f(t),
\end{equation}
where $Q_N(t)$ is the trigonometric polynomial defined by
\[
Q_N(t) := \frac1{N} \sum_{n=1}^{N} \Big| \sum_{k=0}^{n-1} e^{2 \pi i k t}
- \frac1{N} \sum_{m=1}^{N} \sum_{k=0}^{m-1} e^{2 \pi i k t} \Big|^2.
\]
By evaluation of the inner sums we get
\[
Q_N(t) = \frac1{N} \sum_{n=1}^{N} \Big| \frac{1 - e^{2 \pi i n t}}{1 - e^{2 \pi i t}} 
- \frac1{N} \sum_{m=1}^{N} \frac{1 - e^{2 \pi i m t}}{1 - e^{2 \pi i t}} \Big|^2,
\]
and it follows that
\[
\begin{aligned}
&4 \, Q_N(t) \sin^2 \pi t 
= \frac1{N} \sum_{n=1}^{N} \Big| (1 - e^{2 \pi i n t})
- \frac1{N} \sum_{m=1}^{N} (1 - e^{2 \pi i m t}) \Big|^2\\
&\quad = \frac1{N} \sum_{n=1}^{N} \Big| e^{2 \pi i n t}
- \frac1{N} \sum_{m=1}^{N} e^{2 \pi i m t} \Big|^2
= \frac1{N} \sum_{n=1}^{N} \Big| e^{2 \pi i n t} \Big|^2
- \Big| \frac1{N} \sum_{n=1}^{N} e^{2 \pi i n t} \Big|^2
= 1 - \frac{\sin^2 \pi N t}{N^2 \sin^2 \pi t} \, .
\end{aligned}
\]
We have thus obtained the formula
\[
Q_N(t) = \frac1{4 \sin^2 \pi t}
\left( 1 - \frac{\sin^2 \pi N t}{N^2 \sin^2 \pi t} \right).
\]
The conclusion of \eqref{eq:limit-variance} from this formula is immediate:
in the case when the integral on the right hand side of \eqref{eq:limit-variance}
is finite one should merely apply to \eqref{eq:kernel} the dominated convergence
theorem, while in the case of divergence of the integral in \eqref{eq:limit-variance}
the result follows from Fatou's lemma.
\end{proof}

\subsection{}
\label{subsection:bmo}
We shall now consider the special case when $H = L^2(\T)$ and the
unitary operator $U$ is the irrational rotation,
\[
(Uf)(x) = f(x + \alpha), \quad f \in L^2(\T),
\]
where $\alpha$ is a fixed irrational number. In this case the ergodic
sums have the form
\begin{equation}
\label{eq:ergodic-sums}
S_n(x) = \sum_{k=0}^{n-1} f(x + k \alpha).
\end{equation}

Much attention has been paid to the case when these ergodic sums are
\emph{bounded}, that is, $\sup_{n} |S_n(x)| < \infty$ for some
(or every) $x \in \T$. For more details we refer the reader to the
papers \cite{furstenberg-keynes-shapiro, oren, petersen, schoissengeier}
and to the references therein.

Here we consider the \emph{bounded mean oscillation} of the ergodic
sums. In the next theorem we shall see that for a ``good'' function
$f$, the $\bmo$ behavior of the ergodic sums implies that $f$ is a
coboundary with respect to the rotation by $\alpha$.

\begin{theorem}
\label{thm:bmo-riemann}
Let $\alpha$ be an irrational number, and $f$ be a Riemann integrable
function on $\T$. Suppose that for some fixed $x_0 \in \T$, the sequence 
$\{S_n(x_0)\}$, $n=1,2,3,\dots$, belongs to $\bmo$. Then there exists a
function $g \in L^2(\T)$ such that $f(x) = g(x) - g(x + \alpha)$
almost everywhere.
\end{theorem}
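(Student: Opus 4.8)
The plan is to reduce everything to the variance criterion of Theorem \ref{eq:thm-bounded-variance}. Since here $H = L^2(\T)$ and $U$ is the rotation by $\alpha$, it suffices to prove that the variances $V_N$ of $f$, defined in \eqref{eq:variance}, stay bounded; once this is known, $f$ is a coboundary, i.e.\ $f = g - Ug$ with $g \in L^2(\T)$, which is precisely the asserted relation $f(x) = g(x) - g(x+\alpha)$ a.e. So the entire task becomes the following: \emph{deduce a bound on the $L^2$-variance over $\T$ from a $\bmo$ hypothesis at the single point $x_0$}.

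First I would write $V_N = \int_\T \Phi_N(x)\,dx$, where $\Phi_N(x) := \frac1N \sum_{n=1}^N \big| S_n(x) - \frac1N\sum_{m=1}^N S_m(x) \big|^2$ is the pointwise mean-square oscillation of the ergodic sums \eqref{eq:ergodic-sums} over the window $n = 1, \dots, N$. Because $f$ is Riemann integrable, hence bounded and continuous almost everywhere, each translate $f(x + k\alpha)$ is Riemann integrable, and $\Phi_N$, being a finite algebraic combination of such translates, is Riemann integrable as well. This lets me invoke the equidistribution of the orbit $\{x_0 + j\alpha\}$ to replace the space integral by an orbit average along the \emph{same} base point $x_0$ appearing in the hypothesis:
\[
V_N = \int_\T \Phi_N(x)\,dx = \lim_{J \to \infty} \frac1J \sum_{j=0}^{J-1} \Phi_N(x_0 + j\alpha).
\]

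The key computation is the evaluation of $\Phi_N(x_0 + j\alpha)$. Writing $A_m := S_m(x_0)$ with $A_0 := 0$, the cocycle identity gives $S_n(x_0 + j\alpha) = A_{j+n} - A_j$, so the constant shift $A_j$ cancels upon subtraction of the average over $n$. Hence $\Phi_N(x_0 + j\alpha)$ equals exactly the mean-square oscillation of the sequence $\{A_k\} = \{S_k(x_0)\}$ over the block of indices $j+1, \dots, j+N$. This is the crucial link: the spatial variance at the orbit point $x_0 + j\alpha$ is nothing but the squared oscillation of the $\bmo$ sequence on a shifted window of length $N$.

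It remains to bound these windowed squared oscillations uniformly in $j$ and $N$. The hypothesis supplies only the $L^1$-type bound \eqref{eq:def-bmo-1}, so the main technical point — the step I expect to be the real obstacle — is to upgrade control of the $L^1$ mean oscillation to the \emph{squared} mean oscillation, with a constant independent of the window. This is precisely the content of a discrete John--Nirenberg inequality for $\bmo$ sequences, which I would establish (or cite): a sequence satisfying \eqref{eq:def-bmo-1} with constant $M$ also satisfies the analogous squared inequality over every block $[n,m)$, with a constant $M'$ depending only on $M$. Granting this, each term satisfies $\Phi_N(x_0 + j\alpha) \leq M'$, whence $V_N \leq M'$ for every $N$; Theorem \ref{eq:thm-bounded-variance} then yields that $f$ is a coboundary, which completes the proof.
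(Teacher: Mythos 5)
Your proposal is correct and follows essentially the same route as the paper: reduce to the variance criterion of Theorem \ref{eq:thm-bounded-variance}, use the cocycle identity $S_n(x_0+j\alpha)=S_{j+n}(x_0)-S_j(x_0)$ to identify the pointwise variance along the orbit with windowed oscillations of the sequence $\{S_n(x_0)\}$, upgrade the $\ell^1$ bound \eqref{eq:def-bmo-1} to an $\ell^2$ bound via the discrete John--Nirenberg inequality (the paper's Lemma \ref{lem:john-nirenberg}), and exploit Riemann integrability to pass from orbit values to the integral $V_N$. The only cosmetic difference is in that last step, where you invoke Weyl equidistribution of $\{x_0+j\alpha\}$ to write $V_N$ as a limit of orbit averages, while the paper observes that the Riemann integrable function $v_N$ is bounded by $M$ on a dense set and hence has integral at most $M^2$ --- two formulations of the same fact.
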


Recall the definition \eqref{eq:def-bmo-1} of the space $\bmo$ given in Section
\ref{section:introduction}. It is well-known that replacing the condition
\eqref{eq:def-bmo-1} with an appropriate $\ell^p$ version yields an equivalent
definition of $\bmo$. This is a consequence of a classical theorem of John and
Nirenberg \cite{john-nirenberg} (see also \cite{garnett}, Chapter VI
\textsection 2) in a version adopted for sequences. In the next lemma we
formulate this fact for $p=2$.

\begin{lemma}[\cite{john-nirenberg}]
\label{lem:john-nirenberg}
$\{c_n\} \in \bmo$ if and only if there is a constant $M$ such that
\begin{equation}
\label{eq:def-bmo-2}
\left(
\frac1{m-n} \sum_{k=n}^{m-1} \Big| c_k - \frac{c_{n} + \dots + c_{m-1}}{m-n} \Big|^2
\right)^{1/2} \leq M
\end{equation}
for every $n < m$.
\end{lemma}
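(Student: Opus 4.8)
The plan is to treat the two implications separately. One direction is immediate: by the Cauchy--Schwarz inequality the $\ell^1$ mean of the numbers $|c_k - (c_n + \dots + c_{m-1})/(m-n)|$ over the window $n \leq k < m$ never exceeds their $\ell^2$ mean, so the validity of \eqref{eq:def-bmo-2} with constant $M$ forces \eqref{eq:def-bmo-1} with the same $M$. The whole content of the lemma is therefore the converse, that membership in $\bmo$ in the sense of \eqref{eq:def-bmo-1} already yields the stronger $\ell^2$ bound \eqref{eq:def-bmo-2}; this is exactly the point of the John--Nirenberg theorem \cite{john-nirenberg}, and I would deduce it by transferring the classical (continuous) statement to sequences.

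To do this I would associate to $\{c_n\}$ the step function $f$ on $\R$ given by $f(x) = c_k$ for $x \in [k, k+1)$. For an interval $J = [n,m)$ with integer endpoints the mean of $f$ over $J$ equals $(c_n + \dots + c_{m-1})/(m-n)$, and the mean oscillation of $f$ over $J$ coincides term by term with the left side of \eqref{eq:def-bmo-1}; thus the hypothesis says precisely that $f$ has mean oscillation at most $M$ over every integer-endpoint interval. The key step is to upgrade this to a genuine $\bmo$ bound for $f$ over all intervals $[a,b) \subset \R$, with a constant comparable to $M$.

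I would split this comparison according to the length of $[a,b)$. When $b - a \geq 1$ I would sandwich $[a,b)$ between the integer-endpoint intervals $I = [\lceil a \rceil, \lfloor b \rfloor)$ and $I' = [\lfloor a \rfloor, \lceil b \rceil)$, noting $|I'| \leq 3\,|[a,b)|$; replacing the mean of $f$ over $[a,b)$ by the mean over $I'$ costs only a bounded factor, and the latter is controlled by the hypothesis since $I'$ has integer endpoints. When $b - a < 1$ the interval meets at most two consecutive steps, so the oscillation of $f$ there is at most the single gap $|c_k - c_{k+1}|$; applying \eqref{eq:def-bmo-1} to the two-point window $[k, k+2)$ gives $|c_k - c_{k+1}| \leq 2M$, which settles this case as well. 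Together these show $f \in \bmo(\R)$ with norm $O(M)$.

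With $f \in \bmo(\R)$ in hand I would invoke the classical John--Nirenberg theorem \cite{john-nirenberg}, in its standard consequence that the $L^2$ mean oscillation of a $\bmo$ function is dominated by its $L^1$ ($\bmo$) norm. Restricting the resulting inequality to an arbitrary integer-endpoint interval $J = [n,m)$, and recalling that $f$ is constant on each unit subinterval, turns the $L^2$ oscillation of $f$ over $J$ into exactly the left side of \eqref{eq:def-bmo-2}, which is therefore $O(M)$, as required. I expect the main obstacle to be the interval-comparison step above --- specifically the bookkeeping needed to pass from arbitrary intervals to integer-endpoint ones, and to check that short intervals contribute only $O(M)$ via the neighbouring-gap estimate $|c_k - c_{k+1}| \leq 2M$. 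As an alternative, avoiding any appeal to the continuous theorem, one could prove the exponential distribution inequality $\#\{k \in J : |c_k - c_J| > \lambda\} \leq C\,|J|\,e^{-c\lambda/M}$ directly for sequences by a Calder\'on--Zygmund stopping-time argument on integer-endpoint subintervals (the only discrete subtlety being that a selected interval's mean is controlled up to a factor $2$, coming from the single endpoint term, in place of the exact rising-sun equality), and then integrate the tail to obtain \eqref{eq:def-bmo-2}.
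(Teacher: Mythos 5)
Your proposal is correct, and it follows exactly the route the paper intends: the paper gives no proof of this lemma beyond citing the classical John--Nirenberg theorem ``in a version adopted for sequences,'' and your argument carries out precisely that adaptation (Cauchy--Schwarz for the easy direction; the step-function transfer with the integer-endpoint sandwich $|I'|\leq 3|[a,b)|$ for long intervals and the gap bound $|c_k-c_{k+1}|\leq 2M$ from the two-point window for short ones, followed by the standard $L^2$ consequence of John--Nirenberg). All the details check out, so this is a correct filling-in of the proof the paper leaves to the reference.
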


\begin{proof}[Proof of Theorem \ref{thm:bmo-riemann}]
Define
\[
v_N (x) := \left( \frac1{N} \sum_{n=1}^{N} 
\Big| S_n(x) - \frac{S_1(x) + \cdots + S_N(x)}{N} \Big|^2
\right)^{1/2} \quad (x \in \T).
\]
Using the obvious property $S_n(x + j \alpha) = S_{j+n}(x) - S_j(x)$
it follows that
\[
v_N(x + j \alpha) = \left(
\frac1{N} \sum_{n=1}^{N} \Big| S_{j+n}(x) - 
\frac{S_{j+1}(x) + \cdots + S_{j+N}(x)}{N} \Big|^2
\right)^{1/2}.
\]
The assumption that $\{S_n(x_0)\} \in \bmo$ combined with Lemma
\ref{lem:john-nirenberg} thus implies the existence of a
constant $M$ such that
\[
v_N(x_0 + j \alpha) \leq M, \quad j=0,1,2,\dots \,.
\]
The function $v_N$ is therefore bounded by $M$ on a dense subset
of $\T$. But $v_N$ is a Riemann integrable function, since so
is $f$, hence it follows that
\[
\int_{\T} v_N (x)^2 \, dx \leq M^2, \quad N=1,2,\dots \,.
\]
On the other hand, we have $\int v_N (x)^2 \, dx = V_N$,
where $V_N$ is the variance defined by \eqref{eq:variance}.
The variances are therefore bounded, so the proof is concluded
by Theorem \ref{eq:thm-bounded-variance}.
\end{proof}

\subsection{}
We can now prove Theorem \ref{thm:main-kesten}. In fact, this theorem
is a consequence of the following more general result.

\begin{theorem}
\label{thm:bmo-kesten}
Let $\alpha$ be an irrational number, and $S \subset \T$ be a measurable
set whose boundary has Lebesgue measure zero. Let $\nu(n,S)$ denote the
number of integers $0 \leq k \leq n-1$ such that $k \alpha \in S$.
If the sequence $\{\nu(n,S) - n \mes S\}$, $n=1,2,3,\dots$, belongs
to $\bmo$, then $\mes S \in \zalpha$.
\end{theorem}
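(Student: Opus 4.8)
The plan is to identify the discrepancy sequence with the \emph{ergodic sums}, evaluated at the single point $x_0 = 0$, of the function $f := \1_S - \mes S$ under the rotation $(Uh)(x) = h(x+\alpha)$, and then to run the chain: the $\bmo$ hypothesis forces $f$ to be a coboundary, and being a coboundary forces $\mes S \in \zalpha$. First I would note that $f$ is Riemann integrable: $\1_S$ is bounded and its set of discontinuities is contained in the boundary $\partial S$, which has measure zero by hypothesis, so Lebesgue's criterion applies. Since $\nu(n,S) = \sum_{k=0}^{n-1} \1_S(k\alpha)$, the discrepancy is literally
\[
\nu(n,S) - n \mes S = \sum_{k=0}^{n-1} f(k\alpha) = S_n(0),
\]
the $n$-th ergodic sum of $f$ at $x_0 = 0$. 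Hence the hypothesis says exactly that $\{S_n(0)\} \in \bmo$, and Theorem \ref{thm:bmo-riemann} (applicable because $f$ is Riemann integrable) yields a function $g \in L^2(\T)$ with $\1_S(x) - \mes S = g(x) - g(x+\alpha)$ almost everywhere. Taking real parts, I may assume $g$ is real.

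The decisive step is to extract $\mes S \in \zalpha$ from this cohomological equation, and here the key observation is to \emph{exponentiate}. Writing $c := \mes S$ and $G(x) := e^{2\pi i g(x)}$, the function $G$ is measurable with $|G| = 1$, so $G \in L^2(\T)$ and $G \not\equiv 0$. Because $\1_S$ is integer-valued we have $e^{2\pi i \1_S(x)} = 1$, so exponentiating the coboundary equation collapses its right-hand side to the constant $e^{-2\pi i c}$ and gives
\[
G(x+\alpha) = e^{2\pi i c} \, G(x) \quad \text{a.e.},
\]
that is, $G$ is an eigenfunction of the rotation $U$ with eigenvalue $e^{2\pi i c}$.

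Now I would invoke the elementary description of the point spectrum of an irrational rotation on $L^2(\T)$: the eigenvalues are exactly $e^{2\pi i m\alpha}$, $m \in \Z$, with eigenfunctions $x \mapsto e^{2\pi i m x}$. Expanding $G$ in its Fourier series, the eigenvalue equation forces $\widehat{G}(m)\,(e^{2\pi i m\alpha} - e^{2\pi i c}) = 0$ for every $m$; since $G \not\equiv 0$, some coefficient is nonzero, whence $e^{2\pi i c} = e^{2\pi i m\alpha}$ for that $m$. This means $c - m\alpha \in \Z$, i.e.\ $\mes S = c \in \zalpha$, which is the desired conclusion.

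I expect the only real obstacle to be \emph{recognizing} the exponentiation trick: it is what turns a statement that seems to require the delicate Diophantine analysis of Kesten's original proof into a one-line consequence of the spectrum of a rotation, and it crucially uses that $\1_S$ takes integer values (so that $S$ need not be an interval). The surrounding verifications — Riemann integrability, realness of $g$, and completeness of the exponential eigenbasis — are routine. A heavier alternative, closer to the classical treatment, would instead combine the spectral-measure characterization of coboundaries with Lemma \ref{lemma:variance} to reduce the problem to the convergence of $\sum_{k\neq 0} |\widehat{\1_S}(k)|^2/\sin^2 \pi k\alpha$ and then analyze this sum along the denominators of the convergents of $\alpha$; I would avoid this route, as it demands genuine number-theoretic work that the eigenfunction argument sidesteps.
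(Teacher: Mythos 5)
Your proposal is correct and follows essentially the same route as the paper's proof: you identify the discrepancy with the ergodic sums $S_n(0)$ of $f = \1_S - \mes S$ (Riemann integrable since $\partial S$ is null), apply Theorem \ref{thm:bmo-riemann} to get the coboundary $g$, and then use the same exponentiation trick $\tau(x) = e^{2\pi i g(x)}$ --- the argument of Furstenberg--Keynes--Shapiro and Petersen that the paper explicitly invokes --- to conclude $\mes S \in \zalpha$ from the point spectrum of the rotation. The only cosmetic difference is that you verify the eigenvalue classification by direct Fourier expansion, where the paper simply cites it as known.
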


The proof is a combination of the previous result with an argument due
to Furstenberg, Keynes and Shapiro \cite{furstenberg-keynes-shapiro}
and Petersen \cite{petersen}.

\begin{proof}[Proof of Theorem \ref{thm:bmo-kesten}]
The fact that the boundary of $S$ has Lebesgue measure zero
means that the function $f(x) = \1_S(x) - \mes S$ is Riemann
integrable. Let $S_n(x)$ be the ergodic sums of $f$ defined
by \eqref{eq:ergodic-sums}. Then we have $\{S_n(0)\} \in \bmo$
by assumption, and thus Theorem \ref{thm:bmo-riemann} implies
the existence of a function $g \in L^2(\T)$ such that
$f(x) = g(x) - g(x + \alpha)$ (a.e.). Certainly we may also
suppose that $g$ is real-valued.

Now consider the function $\tau(x) = \exp 2\pi i g(x)$. We have
\[
\tau(x + \alpha) = e^{2\pi i g(x + \alpha)} = e^{2\pi i (g(x)-f(x))} 
= \tau(x) \, e^{2\pi i \mes S} \quad \text{(a.e.)},
\]
since the function $\1_S$ takes integer values. This means 
that $\tau$ is an eigen\-function of the irrational rotation by $\alpha$,
with eigen\-value $\exp 2\pi i \mes S$. However all the eigenvalues are
known to be of the form $\exp 2\pi i j \alpha$, $j \in \Z$, and therefore
$\mes S \in \zalpha$.
\end{proof}


\section{Necessary condition for Riesz bases}
\label{section:necessity}

In this section we prove Theorem \ref{thm:main-no}. We will show that unless
the diophantine condition $|I| \in \zalpha$ holds, the exponential system
$\E(\lam{I})$ cannot be a Riesz basis in $L^2(S)$ for any multiband set $S$.
The proof consists of three main ingredients: Meyer's duality principle,
the results from Section \ref{section:ergodic} about discrepancy and $\bmo$,
and Pavlov's theorem describing the exponential Riesz bases in $L^2(I)$
where $I$ is an interval.

\subsection{}
We start with the formulation of Pavlov's theorem.

Let $f(x)$ be a locally integrable function on $\R$. If $J$
is a bounded interval on $\R$ then
\[
\frac1{|J|} \int_{J} \big| f(x) - f_J \big| \, dx
\]
is called the \emph{mean oscillation} of $f$ over $J$, where
$f_J = |J|^{-1} \int_{J} f(x) \, dx$ denotes the average of
$f$ over $J$. If the mean oscillation is bounded uniformly
over all intervals $J$, then we say that $f$ has bounded
mean oscillation, $f \in \bmo(\R)$.

For a discrete set $\Lambda \subset \R$ we denote by $n_\Lambda(x)$
the `counting function' satisfying
\[
n_\Lambda(b) - n_\Lambda(a) = \#(\Lambda \cap [a,b)), \quad a<b,
\]
which is defined uniquely up to an additive constant.
We will use the following version of Pavlov's theorem
(see \cite[p. 240]{hruscev-nikolskii-pavlov}) formulated
in terms of the counting function $n_\Lambda$.

\begin{theorem}[Hru\u{s}\u{c}ev, Nikol'skii, Pavlov \cite{hruscev-nikolskii-pavlov}]
\label{thm:pavlov}
Let $\Lambda = \{\lambda_n, \, n \in \Z\}$. The exponential system
$\E(\Lambda)$ is a Riesz basis in $L^2(0,a)$, $a > 0$, if and only if
\begin{enumerate-math}
\item
\label{cond:pavlov-separated}
$\Lambda$ is separated, that is, $\inf_{n \neq m} |\lambda_n - \lambda_m| > 0$;
\item
\label{cond:pavlov-bmo}
$f(x) = n_\Lambda(x) - ax$ is a function in $\bmo(\R)$;
\item
\label{cond:pavlov-cancellations}
There is $y > 0$ such that the harmonic continuation $U_f(x+iy)$ of $f$ into the
upper half plane admits the following representation:
\[
U_f(x + iy) = c + \widetilde{u}(x) + v(x), \quad x \in \R,
\]
where $c$ is a constant, $u,v$ are bounded measurable functions, $\|v\|_{\infty} 
< \tfrac1{4}$, and $\widetilde{u}$ is the Hilbert transform of $u$.
\end{enumerate-math}
\end{theorem}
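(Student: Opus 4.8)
The plan is to recognize Theorem~\ref{thm:pavlov} as the Helson--Szeg\H{o} form of Pavlov's description of complete interpolating sequences, and to prove it through the theory of reproducing kernels in model spaces together with the Muckenhoupt $(A_2)$ / Helson--Szeg\H{o} circle of ideas. First I would pass from $L^2(0,a)$ to the frequency side: the Fourier transform identifies $L^2(0,a)$ (after a harmless translation and dilation) with the Paley--Wiener space of type $a$, realized as the model space $K_\Theta = H^2 \ominus \Theta H^2$ attached to the inner function $\Theta(z) = e^{2\pi i a z}$. Under this identification the exponential $e^{2\pi i \lambda t}$ is carried to a multiple of the reproducing kernel $k_\lambda$ of $K_\Theta$ at the point $\lambda$, so that the assertion ``$\E(\Lambda)$ is a Riesz basis in $L^2(0,a)$'' becomes ``the normalized reproducing kernels $\{k_{\lambda_n}/\|k_{\lambda_n}\|\}$ form a Riesz basis of $K_\Theta$.''

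The central step is then the Pavlov--Nikol'skii criterion for a family of reproducing kernels to be a Riesz basis. One attaches to $\Lambda$ a \emph{generating function} $G$ --- an entire function of exponential type $a$ whose zero set is exactly $\Lambda$, obtained as a canonical product of sine type. Under the separation hypothesis~\ref{cond:pavlov-separated} the criterion asserts that the kernels form a Riesz basis precisely when the weight $w(x) = |G(x)|^2$ satisfies the Muckenhoupt condition $(A_2)$ on $\R$. This is the analytic heart of the matter, and I expect it to be the main obstacle: it rests on expressing the Gram matrix of the normalized kernels through a Toeplitz-type operator with symbol built from $G/\overline{G}$, and on showing that its invertibility is equivalent to $(A_2)$; this in turn uses the boundedness of the Riesz projection on $L^2(w)$ together with Carleson-measure estimates controlling the passage between the continuous and the sampled norms.

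It remains to translate the $(A_2)$ property of $w=|G|^2$ into the three conditions of the theorem via the counting function. Here I would invoke the Helson--Szeg\H{o} theorem: a weight $w = e^{\psi}$ lies in $(A_2)$ if and only if $\psi = \phi + \widetilde{\eta}$ with $\phi,\eta$ bounded and $\|\eta\|_\infty < \pi/2$, where $\widetilde{\eta}$ is the conjugate function. The key observation is that $\log|G(x)|$ and $\arg G(x)$ are conjugate harmonic boundary values, and that by the argument principle the phase is governed by the counting function: $\arg G(x)$ equals, up to a bounded error and a linear term, $\pi\bigl(a x - n_\Lambda(x)\bigr) = -\pi f(x)$. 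Hence $\log|G|^2$ is, modulo a constant and a bounded term, the conjugate of $-2\pi f$, so the harmonic extension of $f$ inherits the Helson--Szeg\H{o} splitting; applying the Hilbert transform once more turns it into the representation $U_f = c + \widetilde{u} + v$, and the critical bound $\|\eta\|_\infty < \pi/2$ becomes $\|v\|_\infty < \tfrac14$ after dividing by the factors $2\pi$ from the squaring and the normalization of the conjugate function. This is exactly condition~\ref{cond:pavlov-cancellations}. Finally, the implication $(A_2)\Rightarrow \log w \in \bmo(\R)$ yields $f \in \bmo(\R)$, which is condition~\ref{cond:pavlov-bmo}; conversely \ref{cond:pavlov-bmo} and \ref{cond:pavlov-cancellations} together reconstitute the full $(A_2)$ property, while \ref{cond:pavlov-separated} guarantees the separation needed for $\Lambda$ to be the genuine zero set of a sine-type generating function. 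Assembling these equivalences gives the stated characterization.
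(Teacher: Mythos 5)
The first thing to note is that the paper does not prove Theorem~\ref{thm:pavlov} at all: it is imported verbatim from Hru\v{s}\v{c}ev--Nikol'skii--Pavlov \cite{hruscev-nikolskii-pavlov} (hence the pointer to p.~240 there), and the paper only ever uses part~\ref{cond:pavlov-bmo}, in the proof of Theorem~\ref{thm:main-no}. So your attempt can only be measured against the proof in the cited literature. At the level of architecture your outline does follow that proof: passage to the Paley--Wiener/model space picture, exponentials as reproducing kernels, the generating function $G$ with zero set $\Lambda$, Muckenhoupt's $(A_2)$ condition, and the Helson--Szeg\H{o} theorem; your bookkeeping of the constant ($\pi/2$ divided by $2\pi$ giving $\tfrac14$, the factor $2$ coming from $|G|^2$ and the factor $\pi$ from the jump of the argument at each zero) is also correct.

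However, the key criterion is misstated in a way that matters. For a real sequence $\Lambda$ the zeros of $G$ lie on $\R$, so the weight $w(x)=|G(x)|^2$ vanishes to second order at every $\lambda_n$; then $w^{-1}$ is not even locally integrable, and $w$ is \emph{never} an $(A_2)$ weight. Concretely, for $\Lambda=\Z$ and $a=1$ one has $G(z)=\sin\pi z$ and $w(x)=\sin^2\pi x\notin(A_2)$, although $\E(\Z)$ is an orthonormal basis of $L^2(0,1)$. Pavlov's criterion is that $|G(x+iy)|^2$ satisfies $(A_2)$ for some (equivalently, every) $y>0$, i.e.\ the weight is taken on a horizontal line strictly above the zeros; for $\Lambda=\Z$ this gives $\sin^2\pi x+\sinh^2\pi y$, which is bounded above and below and hence trivially $(A_2)$. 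This is not a cosmetic point: it is exactly why condition~\ref{cond:pavlov-cancellations} of the theorem is formulated for the harmonic continuation $U_f(x+iy)$ at a positive height rather than for $f$ itself, a feature your sketch neither explains nor can recover from the on-axis criterion. Beyond this, the analytic core of the theorem --- the equivalence between the Riesz basis property and the invertibility of the relevant Toeplitz/Gram operator, and between that invertibility and $(A_2)$ --- is explicitly declared to be ``the main obstacle'' and left unproved, so even after correcting the criterion your proposal is an outline of the known proof rather than a proof.
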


In the proof below we shall exploit only part \ref{cond:pavlov-bmo}
of Theorem \ref{thm:pavlov}. We therefore do not discuss part 
\ref{cond:pavlov-cancellations} in more detail. See 
\cite{hruscev-nikolskii-pavlov} for a complete exposition.

\subsection{}
We can finally prove the necessity of the condition $|I| \in \zalpha$.

\begin{proof}[Proof of Theorem \ref{thm:main-no}]
Suppose that $\E(\lam{I})$ is a Riesz basis in $L^2(S)$, for some
multiband set $S \subset \T$. We may suppose that $S$ is semi-closed.
Corollary \ref{cor:duality}, with $U = S$ and $V = I$, implies
that $\E(\lam{S})$ is a Riesz basis in $L^2(-I)$. Using part
\ref{cond:pavlov-bmo} of Theorem \ref{thm:pavlov} it follows that
the function
\[
f(x) = n_{\lam{S}}(x) - |I| x
\]
belongs to $\bmo(\R)$.

Since $\lam{S}$ is a subset of $\Z$, the function $f$ is linear on each
interval $(n-1, n]$ and has slope $-|I|$ there. It is therefore possible
to write $f$ as the sum of two functions, $f = g + h$, where $g(x)$ is
a piecewise constant function which is equal to $f(n)$ on each interval
$(n-1, n]$, and $h(x)$ is a $1$-periodic function which is linear with
slope $-|I|$ on each such an interval. Thus,
\begin{equation}
\label{eq:fn-bmo}
\frac1{m-n} \sum_{k=n+1}^{m}
\Big| f(k) - \frac{f(n+1) + \cdots + f(m)}{m-n} \Big|
\end{equation}
is the mean oscillation of $g$ over the interval $(n, m]$. But since
$g$ differs from $f$ by a bounded function, also $g \in \bmo(\R)$. This
implies that \eqref{eq:fn-bmo} is bounded uniformly with respect to $n$
and $m$, that is, the sequence $\{f(n)\}$, $n=1,2,3,\dots$, belongs to
$\bmo$.

Recall that Landau's inequalities imply that $\mes S = |I|$.
By adding an appropriate constant to the counting function
$n_{\lam{S}}$ we may assume that $n_{\lam{S}}(0) = 0$. This
means that
\[
f(n) = \nu(n, S) - n \mes S, \quad n=1,2,3,\dots,
\]
where $\nu(n,S)$ denotes the number of integers $0 \leq k \leq n-1$ 
such that $k \alpha \in S$. Since we know that $\{f(n)\} \in \bmo$,
it follows from Theorem \ref{thm:bmo-kesten} that $\mes S \in \zalpha$.
We conclude that $|I| \in \zalpha$ and so the theorem is proved.
\end{proof}


\section{Remarks}
\label{section:remarks}

\subsection{}
If $S$ is a single interval, then the complete description of the
exponential Riesz bases $\E(\Lambda)$ in $L^2(S)$ is given by Pavlov's
theorem. Much less is known, however, in the case when $S$ is the
union of two intervals or more. In fact, it is unknown in general
whether a Riesz basis $\E(\Lambda)$ (where $\Lambda \subset \R$)
in $L^2(S)$ exists at all. This existence has been established in
the following special cases:
\begin{enumerate-math}
\item
$S$ is a finite union of disjoint intervals with commensurable
lengths \cite{bezuglaya-katsnelson, lyubarskii-seip}.
\item
$S$ is the union of two general intervals \cite{seip} (in this
paper there is also a partial result for the union of more than
two intervals).
\end{enumerate-math}

Theorem \ref{thm:bases-exist} exhibits a new family of examples
of multiband sets $S$ with a Riesz basis of exponentials, namely,
every set $S$ with the structure \eqref{eq:linear-combination}.
Moreover, in these examples the Riesz basis constructed consists
of exponentials with integer frequencies.

A result of similar type in the non-periodic setting is given
in \cite{lev}.

We also mention the paper \cite{lyubarskii-spitkovsky} where the
authors construct, for any finite union of intervals, a Riesz
basis of exponentials with complex frequencies lying in a horizontal
strip along the real axis (see also \cite{lyubarskii-seip}).

\subsection{}
We mention several questions which are left open.

\begin{enumerate}
\renewcommand\theenumi{(\alph{enumi})}
\item
\label{item:open-riesz}
\emph{For which multiband sets $S \subset \T$, $\mes S = |I|$,
is the exponential system $\E(\lam{I})$ a Riesz basis in $L^2(S)$?}
\end{enumerate}

In the case when $|I| \in \zalpha$ it was proved that condition
\eqref{eq:linear-combination} is sufficient for $S$ (Theorem
\ref{thm:bases-exist}), but we have not classified completely
all such multiband sets $S$. It was also proved that no such
sets exist if $|I| \notin \zalpha$ (Theorem \ref{thm:main-no}).

\begin{enumerate}
\renewcommand\theenumi{(\alph{enumi})}
\setcounter{enumi}{1}
\item
\label{item:open-bmo}
\emph{For which multiband sets $S \subset \T$ does the
discrepancy $D(n,S) = \nu(n,S) - n \mes S$ (with respect
to the irrational number $\alpha$) have bounded mean
oscillation?}
\end{enumerate}

Theorem \ref{thm:bmo-kesten} says that $\mes S \in \zalpha$
is a necessary condition for that. 

As for the \emph{boundedness} of the discrepancy, a necessary
and sufficient condition is known to be the condition
\eqref{eq:linear-combination}. This result was proved by Oren
in \cite{oren} (see also \cite{schoissengeier}). The following
question therefore seems natural:

\begin{enumerate}
\renewcommand\theenumi{(\alph{enumi})}
\setcounter{enumi}{2}
\item
\label{item:open-oren}
\emph{If $S$ is a multiband set such that $\{D(n,S)\} \in
\bmo$, must it have the structure \eqref{eq:linear-combination}?}
\end{enumerate}

If the answer to \ref{item:open-oren} is affirmative, then this
would extend the dichotomy given by Theorem \ref{thm:main-kesten}
for single intervals: either the discrepancy is bounded, or it
does not even have bounded mean oscillation. It would also settle
questions \ref{item:open-riesz} and \ref{item:open-bmo}.

\subsection{}
In connection with `universal' sampling and interpolation, the
result of Olevskii and Ulanovskii mentioned above (Theorem A of
Section \ref{section:introduction}) shows, in particular, that
given any $d > 0$ one can find a system of exponentials with
real frequencies, which is a Riesz basis in $L^2(S)$ for a
``dense'' family of multiband sets $S \subset \R$, $\mes S = d$.

Here we have discussed the problem in the periodic setting, where
an additional restriction is that the exponentials should have
integer frequencies. Theorem \ref{thm:main-yes} implies that given
any \emph{irrational} number $d \in (0,1)$ one can find a system
of exponentials with integer frequencies, which is a Riesz basis
in $L^2(S)$ for a ``dense'' family of multiband sets $S \subset \T$,
$\mes S = d$.

It would be interesting to know what can be said if $d$ is a
\emph{rational} number. Compare also with the results about
universal completeness of exponentials given in 
\cite{olevskii-ulanovskii:universal-2}.


\end{document}